\newtheorem{theorem}{Theorem}[section]
\newtheorem{definition}[theorem]{Definition}
\newtheorem{proposition}[theorem]{Proposition}
\newtheorem{lemma}[theorem]{Lemma}
\newtheorem{corollary}[theorem]{Corollary}
\crefname{theorem}{Theorem}{Theorems}
\crefname{definition}{Definition}{Definitions}
\crefname{proposition}{Proposition}{Propositions}
\crefname{lemma}{Lemma}{Lemmas}
\crefname{corollary}{Corollary}{Corollaries}
\newcommand{\matr}[1]{\boldsymbol{\mathbf{#1}}}  
\newcommand{\Lapl}{\matr{L}}
\newcommand{\p}{\matr{p}}
\newcommand{\T}{\matr{T}}
\newcommand{\Id}{\matr{I}}
\newcommand{\dkl}{D_{\operatorname{KL}}}
\title{Evolution of Conditional Entropy for Diffusion Dynamics on Graphs}
\author{
 Samuel Koovely \\
  Department of Mathematical Modeling and Machine Learning\\
  University of Zurich\\
  Zurich, Switzerland \\
  \texttt{samuel.koovely@math.uzh.ch} \\
   \And
 Alexandre Bovet \\
  Department of Mathematical Modeling and Machine Learning\\
  University of Zurich\\
  Zurich, Switzerland\\
  \texttt{alexandre.bovet@math.uzh.ch}
}
\begin{document}
\maketitle
\begin{abstract}
  The modeling of diffusion processes on graphs is the basis for many network science and machine learning approaches. Entropic measures of network-based diffusion have recently been employed to investigate the reversibility of these processes and the diversity of the modeled systems. While results about their steady state are well-known, very few exact results about their finite-time evolution exist.
  Here, we introduce the conditional entropy of heat diffusion in graphs, and outline a mathematical framework that contextualizes diffusion and conditional entropy within the theories of continuous-time Markov chains and information theory. In particular,  we highlight that this entropic measure satisfies an information-theoretical version of the second law of thermodynamics, thereby providing a parallelism between diffusion dynamics on networks and their physical counterparts. Furthermore, we obtain explicit results for its evolution on complete, path, and circulant graphs, as well as a mean-field approximation for Erdős-Rényi graphs. We also obtain asymptotic results for general networks and provide bounds for the evolution of conditional entropy. Finally, we experimentally demonstrate several properties of conditional entropy for diffusion over random graphs, such as the Watts-Strogatz model.
\end{abstract}


\section{Introduction}
Dynamics on graphs reflect and are governed by the structure over which they occur \cite{lambiotte_modularity_2021}. Various works in the field of complex networks have studied and used diffusion dynamics \cite{masuda_random_2017} on graphs; for instance Markov stability \cite{delvenne_stability_2010, schaub_markov_2012, lambiotte_random_2014} and InfoMap \cite{rosvall_maps_2008} are community detection methods based on diffusion on static graphs, while flow stability \cite{bovet_flow_2022} utilizes diffusion to cluster nodes in temporal networks.
Diffusion dynamics can be formally modelled as stochastic processes. In addition to their stochastic and statistical properties, one can study these processes from the perspective of information theory \cite{shannon_mathematical_1948}: the goal is to describe and characterize the evolution of a system through the quantification of exchange, or rates of exchange, of information, namely entropy and entropy rate, respectively. This point of view has seen a recent revival in interest for the study of complex networks, as information theory provides a unifying framework for describing various properties and phenomena of complex systems \cite{varley_information_2025}. There are different notions of entropy in networked systems; including entropy production \cite{schnakenberg_network_1976,nartallo-kaluarachchi_broken_2024}, which measure the divergence of a process from detailed balance, the Markov chain entropy \cite{gomez-gardenes_entropy_2008}, to be understood as an entropy rate rather than entropy despite its name, and the spectral entropy\cite{de_domenico_spectral_2016,ghavasieh_statistical_2020}, adapting the notion of Von Neumann entropy, originally introduced in the field of quantum information, to networks. The latter has been used to define a renormalization group for complex networks \cite{villegas_laplacian_2022, villegas_laplacian_2023} and for community detection \cite{villegas_multi-scale_2025}. 

Here, we introduce the conditional entropy of the heat diffusion process on graphs, which can also be modelled as a continuous-time Markov chain or a continuous-time random walk.
While thermodynamic laws based on information theory for general discrete-time Markov chains have already been established \cite{cover_elements_2006}, we demonstrate here that these results extend to the conditional entropy of continuous-time heat diffusion on graphs, making it a quantity that enables the study of diffusion dynamics on networks with physical intuition. 
We also provide the first exact results for its finite-time evolution on specific regular graphs, establishing heuristic upper and lower bounds for conditional entropy evolution curves. We then provide results for the asymptotic behaviour in general networks. Finally, we experimentally demonstrate several properties of conditional entropy for diffusion over random graphs, such as the Watts-Strogatz model, and derive an approximate solution for Erdős-Rényi graphs.

We begin with a section dedicated to the theoretical framework. In \cref{sec:TheoreticalFramework} we describe the notation for diffusion processes and information-theoretical quantities. This framework allows us to introduce the main object of interest: conditional entropy. In \cref{sec:ConditionalEntropy}, we highlight a general property for the local behavior of conditional entropy, analogous to the second law of thermodynamics. We therefore establish diffusion dynamics paired with conditional entropy as a candidate for a formalism of thermodynamics of networks. We then derive explicit spectral solutions that allow fast computation of conditional entropy evolution on some families of regular graphs (complete, path, and circulant graphs) based on the graph Fourier transform \cite{ortega_graph_2018} approach, and show connections with the discrete Fourier and cosine transforms. In the final part of the section, we derive general results about the asymptotic behaviour of conditional entropy and the relaxation time of diffusion. In \cref{sec:Simulations}, we extend our analysis to some random graphs (ER and Watts-Strogatz graphs) by simulations. Finally, we summarize some of our findings and contributions, and highlight some of the challenges and potential developments of this work.

\section{Theoretical Framework}\label{sec:TheoreticalFramework}
We now briefly introduce the notation of diffusion on graphs as a stochastic process. For readers unfamiliar with such notions, and those interested in a more complete discussion, we suggest reading \cite{masuda_random_2017}.

\subsection{Continuous-Time Diffusion on Graphs}
Let $ G = (V, E) $ be an undirected  graph, where:
\begin{itemize}
    \item $ V = \{v_1, v_2, \dots, v_N\} $ is the set of $ N  \in \mathbb{N}$ nodes, 
    \item $ E = \{e_1, e_2, \dots, e_M \} \subset V \times V $ is the set of $M \in \mathbb{N}$ edges connecting pairs of nodes.
\end{itemize}

We model continuous-time diffusion as a homogeneous Markov chain $\{ X(t) \}_{t \in \mathbb{R}^+}$ \cite{lanchier_stochastic_2017, masuda_random_2017} where the state at time $ t $ is $ X(t) \in  \left\{v_1, v_2, \dots, v_N \right\} $.  We use $p(t)$ to denote the law of $X(t)$, and $ p_i(t) $ represents the probability at node $ v_i $ at time $t$. We often manipulate $p(t)$ in vectorized form; in this case, we use the bold notation $\p(t)$ like for other vectors (lowercase) and matrices (uppercase). The Markov chain is characterized by an intensity matrix $\matr{Q}$ whose entries correspond to the instantaneous rate at which the chain transitions between states. The forward Kolmogorov equation then describes the evolution of the process:
\begin{equation}
    \frac{d \p^{\intercal}(t)}{dt} = \p^{\intercal}(t) \matr{Q},
\end{equation} whose solution is
\begin{equation}
    \p^{\intercal}(t) = \p^{\intercal}(0) e^{\matr{Q}t},
\end{equation}
and describe the distribution of the probability vector as a function of time. \\
More generally, for all times $t_1 \leqslant t_2$ we can introduce the concept of transition operator $\T(t_1, t_2)$ such that
\begin{equation}
     \forall i,j : T_{i,j}(t_1, t_2) \coloneqq p(X_j(t_2) | X_i(t_1)) = \frac{p(X_i(t_1), X_j(t_2))}{p(X_i (t_1))}.
\end{equation}
Given that $\{ X(t) \}$ is a Markov chain, we get
\begin{equation}
    \p^{\intercal}(t_2) = \p^{\intercal}(t_1) \T(t_1, t_2),
\end{equation}
and we have that
\begin{equation}
    \forall t_1 \leqslant t_2: \T(t_1, t_2) = e^{\matr{Q} (t_2-t_1)}.
\end{equation}
In this article, we focus on the heat diffusion process, which can also be seen as an edge-centric continuous-time random walk \cite{masuda_random_2017}. We provide results for continuous-time node-centric random walks in \cref{appendix:RW}. In this case of heat diffusion, the intensity matrix is based on the operator known as the combinatorial Laplacian (denoted $\Lapl$), defined as follows:
\begin{equation}
    \Lapl_{i,j} = \begin{cases} 
      d_i & \text{if } i=j, \\
      - 1 & \text{if } (v_i, v_j) \in E, i \neq j, \\
      0 & \text{otherwise},
   \end{cases}
\end{equation}
where $d_i$ indicates the degree of node $i$. \\
We can then write the corresponding Kolmogorov forward equation:
\begin{equation}
    \frac{d \p^{\intercal}(t)}{dt} = - \lambda \p^{\intercal}(t)  \Lapl, \text{ with the respective solution }  \p^{\intercal}(t) = \p^{\intercal}(0) e^{- \lambda \Lapl t}.
\end{equation}
where the parameter $\lambda \in \mathbb{R}^+$controls the rate of diffusion. On static networks, changing this parameter only corresponds to a rescaling of time; therefore, in the following, we will always consider it to be equal to 1.
\\
Irreducible continuous-time Markov chains are ergodic \cite{lanchier_stochastic_2017}. For heat diffusion this implies that on connected graphs, the probability vector converges to the stationary distribution $ \matr{\pi}^{\intercal} = \frac{1}{N} \matr{1}^{\intercal}_N = [\frac{1}{N}, \frac{1}{N}, \dots, \frac{1}{N}]$, where $\matr{1}_N$ is the $N$-th dimensional all-ones vector. \\
This stationary distribution $\pi$ reflects the equilibrium state of the diffusion process, where each node’s value remains constant over time, and satisfies the stationary condition:
\begin{equation} \label{eq:stationarity}
    \forall t \geqslant 0 : \matr{\pi}^{\intercal} = \matr{\pi}^{\intercal} e^{- \Lapl t}.
\end{equation}
For the remainder of the work, we reserve the symbols $X$ and $Y$ to indicate generic random variables and $\{X(t)\}, \{Y(t)\}$ for generic stochastic processes. Whenever we derive results specific to heat diffusion, we denote the stochastic process by $\{Z(t)\}$.

\subsection{Information Theory}

In this part, we introduce the information-theoretical backbone of our work. We follow loosely the notation from \cite{cover_elements_2006}. A more formal introduction to information theory can be found in \cite{csiszar_information_2011}, and for an exposition highlighting aspects relevant in complex systems, we recommend \cite{varley_information_2025}.

\begin{definition}
    The (Shannon) entropy of a discrete random variable $X$ taking values on the state space $\mathcal{X}$ with elements $x$ is defined as
    \begin{equation}
        H(X) \coloneqq - \sum_{x \in \mathcal{X}} p(x) \log(p(x)).
    \end{equation}
    Since random variables that are identically distributed have the same entropy, we will sometimes use the expression $H(p)$ meaning $H(X)$, where $X$ is $p$ distributed.
\end{definition}
Conceptually, the choice of the logarithm basis is irrelevant in our analysis. We opted for the natural basis.
\begin{definition}
    For $(X,Y)$ taking values in the state space $\mathcal{X} \times \mathcal{Y}$ and  with distribution $p(x,y)$, we define the conditional entropy $H(Y | X)$ as:
    \begin{equation}
        H(Y | X) \coloneqq \sum_{x \in \mathcal{X}} p(x) \sum_{y \in \mathcal{Y}} p(y | x) \log(p(y | x)).
    \end{equation}
\end{definition}
\begin{definition}
    The Kullback-Leibler divergence (in the following KL divergence) between probability mass functions $p$ and $q$ defined on the same state space and sharing the same support is defined as:
    \begin{equation}
        \dkl(p || q) \coloneqq  \sum_x p(x) \log \left( \frac{p(x)}{q(x)} \right).
    \end{equation}
\end{definition}
The KL divergence  $\dkl(p || q)$  provides a notion of distance between $p$ and $q$. However, it is not symmetrical, so it is more correct to consider it as a measure of inefficiency in approximating $p$ with $q$. 
We can use it to measure distances between conditional probability distributions:
\begin{definition}
    The KL divergence between conditional probability functions $p(y|x)$ and $q(y|x)$ is defined as
    \begin{equation}
        \dkl(p(y | x) || q(y | x)) \coloneqq \sum_x p(x) \sum_y p(y | x) \log \left( \frac{p(y | x)}{q(y | x)}\right).
    \end{equation}
\end{definition}
The KL divergence has the following two properties that we will use later. We omit their proofs, but one can find them in \cite[Thm. 2.5.3]{cover_elements_2006}, and \cite[Lemma 3.11]{csiszar_information_2011}.

\begin{proposition}[Chain Rule for KL Divergence] \label{prop:chainKL}
    \begin{equation}
        \dkl(p(x,y) || q(x,y)) = \dkl(p(x) || q(x)) + \dkl(p(y | x) || q(y | x)).
    \end{equation}
\end{proposition}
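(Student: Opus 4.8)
The plan is to reduce everything to the definition of KL divergence on the joint space and then exploit the multiplicative structure of probability mass functions. I would start from the left-hand side, writing $\dkl(p(x,y) || q(x,y)) = \sum_{x,y} p(x,y) \log(p(x,y)/q(x,y))$, and then substitute the product-rule factorizations $p(x,y) = p(x)\,p(y\mid x)$ and $q(x,y) = q(x)\,q(y\mid x)$ into both the numerator and the denominator of the logarithm's argument.

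Next I would use the fact that the logarithm of a product is the sum of logarithms to rewrite the summand as $\log(p(x)/q(x)) + \log(p(y\mid x)/q(y\mid x))$. By linearity of the double sum, this separates $\dkl(p(x,y)||q(x,y))$ into two pieces. For the first piece, the summand $\log(p(x)/q(x))$ does not depend on $y$, so I would marginalize over $y$ using $\sum_y p(x,y) = p(x)$, which recovers exactly $\dkl(p(x)||q(x))$. For the second piece, substituting $p(x,y) = p(x)\,p(y\mid x)$ and regrouping the two summations yields $\sum_x p(x) \sum_y p(y\mid x)\log(p(y\mid x)/q(y\mid x))$, which is precisely the definition of $\dkl(p(y\mid x)||q(y\mid x))$ stated above. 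Adding the two pieces gives the claimed identity.

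The computation itself is routine; the only points demanding care are the support conditions and the well-definedness of the factorizations. I would note that the shared-support assumption built into the definition of KL divergence guarantees $q(x) > 0$ and $q(y\mid x) > 0$ wherever $p(x,y) > 0$, so every logarithm that contributes is finite and the factorization of $q$ is valid on the relevant support. The main (and rather mild) obstacle is therefore bookkeeping: justifying the marginalization and the interchange of the order of summation, both of which are immediate here since the state spaces $\mathcal{X}$ and $\mathcal{Y}$ are finite, together with the standard conventions $0\log 0 = 0$ and $0\log(0/0) = 0$ that let the terms outside the support be dropped without affecting the sums.
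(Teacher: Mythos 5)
Your proof is correct and complete: the factorization of the joint mass functions, the splitting of the logarithm, and the marginalization over $y$ in the first term are exactly the standard argument, and your remarks on shared support and the $0\log 0$ conventions cover the only delicate points. The paper itself omits the proof of this proposition and defers to the textbook references it cites, whose argument is precisely the one you wrote, so your approach matches the intended one.
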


\begin{proposition}[Pinsker Inequality] \label{prop:Pinsker}
    Let $p$ and $q$ be distributions defined on the same probability space. Then
     \begin{equation}
         \dkl(p\| q) \geqslant \frac{1}{2} \|p- q \|_1^2.
     \end{equation}
     We notice that for this result, logarithm bases other than Euler's number $e$ lead to slightly different versions of the inequality, which differ by a constant factor from our formulation.
\end{proposition}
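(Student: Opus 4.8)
The plan is to prove Pinsker's inequality in the equivalent, cleaner form $\dkl(p\|q) \geqslant \tfrac12\|p-q\|_1^2$ (the $u$ in the display is simply the second argument of the divergence) by the standard two-step reduction: first collapse the problem onto a two-point state space by a coarse-graining argument, and then settle the resulting scalar inequality with elementary single-variable calculus.

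First I would introduce the set $A \coloneqq \{x : p(x) \geqslant q(x)\}$ and write $a \coloneqq p(A)$, $b \coloneqq q(A)$. Since $p$ and $q$ are both probability distributions, $\sum_x\bigl(p(x)-q(x)\bigr)=0$, so the positive and negative parts of $p-q$ carry equal mass and $\|p-q\|_1 = 2\,(a-b)$ with $a\geqslant b$. The key reduction is a monotonicity (data-processing) step: merging the states into the two cells $A$ and $A^c$ cannot increase the divergence. Concretely, applying the log-sum inequality separately to the indices lying in $A$ and to those in $A^c$ yields
\[
\dkl(p\|q) \;\geqslant\; a\log\frac{a}{b} + (1-a)\log\frac{1-a}{1-b} \;=:\; d(a\|b),
\]
the binary KL divergence between the two-point distributions $(a,1-a)$ and $(b,1-b)$.

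It then remains to prove the scalar inequality $d(a\|b) \geqslant 2(a-b)^2$. For fixed $a$, I would set $g(b) \coloneqq d(a\|b) - 2(a-b)^2$ and differentiate in $b$, obtaining
\[
g'(b) = \frac{b-a}{b(1-b)} + 4(a-b) = (b-a)\left(\frac{1}{b(1-b)} - 4\right).
\]
Because $b(1-b) \leqslant \tfrac14$ on $[0,1]$, the bracketed factor is nonnegative, so $g'$ carries the sign of $b-a$; hence $g$ is nonincreasing for $b<a$ and nondecreasing for $b>a$, attaining its minimum at $b=a$, where $g(a)=0$. Thus $g\geqslant 0$ everywhere, and combining with the reduction step gives
\[
\dkl(p\|q) \;\geqslant\; d(a\|b) \;\geqslant\; 2(a-b)^2 \;=\; 2\left(\tfrac12\|p-q\|_1\right)^2 \;=\; \tfrac12\|p-q\|_1^2 .
\]

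I expect the reduction step to be the main obstacle rather than the scalar estimate: one must justify that coarse-graining onto the partition $\{A, A^c\}$ only decreases the divergence. This rests on the log-sum inequality, itself a consequence of the convexity of $t\mapsto t\log t$ (equivalently Jensen's inequality applied to the weighted ratios $p(x)/q(x)$), so the cleanest route is to record the log-sum inequality as an auxiliary lemma and then apply it cellwise. A minor point needing care is that the constant $\tfrac12$ (equivalently the factor $2$ in the binary bound) is tied to the natural logarithm, consistent with the remark following the statement that other logarithm bases rescale the constant.
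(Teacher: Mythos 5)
Your proof is correct and complete: the paper deliberately omits a proof of this proposition, citing \cite{cover_elements_2006} and \cite{csiszar_information_2011}, and your argument --- coarse-graining onto $\{A,A^c\}$ via the log-sum inequality and then verifying the binary bound $d(a\|b)\geqslant 2(a-b)^2$ by differentiating in $b$ --- is precisely the standard proof found in those references, with the constant $\tfrac12$ correctly tied to the natural logarithm. You also correctly read the statement's $u$ as the second argument $q$ of the divergence (a typo in the paper), so nothing further is needed.
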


As we will often manipulate distributions in their vectorized form, we will often write for convenience expressions such as $H(\p)$ meaning $H(p)$, and $\dkl(\p || \matr{q})$ meaning $\dkl(\p^{\intercal} || \matr{q}^{\intercal})$ or $\dkl(p || q)$.

\section{Conditional Entropy of Diffusion Dynamics on Graphs} \label{sec:ConditionalEntropy}
We now have all the ingredients to introduce the object of interest of this article: the conditional entropy curve for diffusion dynamics on graphs.
\begin{definition}
    Consider a Markov chain $\{ X(t) \}_{t \in \mathbb{R}^+}$ describing a diffusion dynamic. We denote the $i$-th row entropy associated with the probability distribution of transitioning from node $i$ at time $0$ to any other node at time $t$ by
    \begin{equation}
        H_i (t|0) \coloneqq - \sum_j T_{i,j}(0,t) \log(T_{i,j}(0,t)).
    \end{equation}
    
    Then, we define the conditional entropy of the chain as
    \begin{equation} \label{eq:conditionalH_rows}
        H(X(t) | X(0)) \coloneqq \sum_{i} p_i (0) H_i(t | 0).
    \end{equation} 
\end{definition}
 This object allows us to describe a formalism for thermodynamics on networks. The heat is modeled by $\{Z(t)\}$, whereas the conditional entropy $H(Z(t) |Z(0))$ plays the role of the entropy. We depict in \cref{fig:trajectories_path} the trajectories of $Z(t)$ in a path graph starting from the initial condition $Z(0) = \delta_1$ (i.e., initially all the probability is concentrated at one end of the path graph). The entries of $Z(t)$ converge to the stationary distribution values (see \cref{eq:stationarity}), not necessarily monotonically. We also depict the entropy curve of this diffusion dynamic, which shows a typical monotonic convergence towards an asymptotic value (discussed later in \cref{subsec:asymptotic_behaviour}).
\begin{figure}
    \centering
      \label{fig:trajectories_path}\includegraphics[scale=0.4]{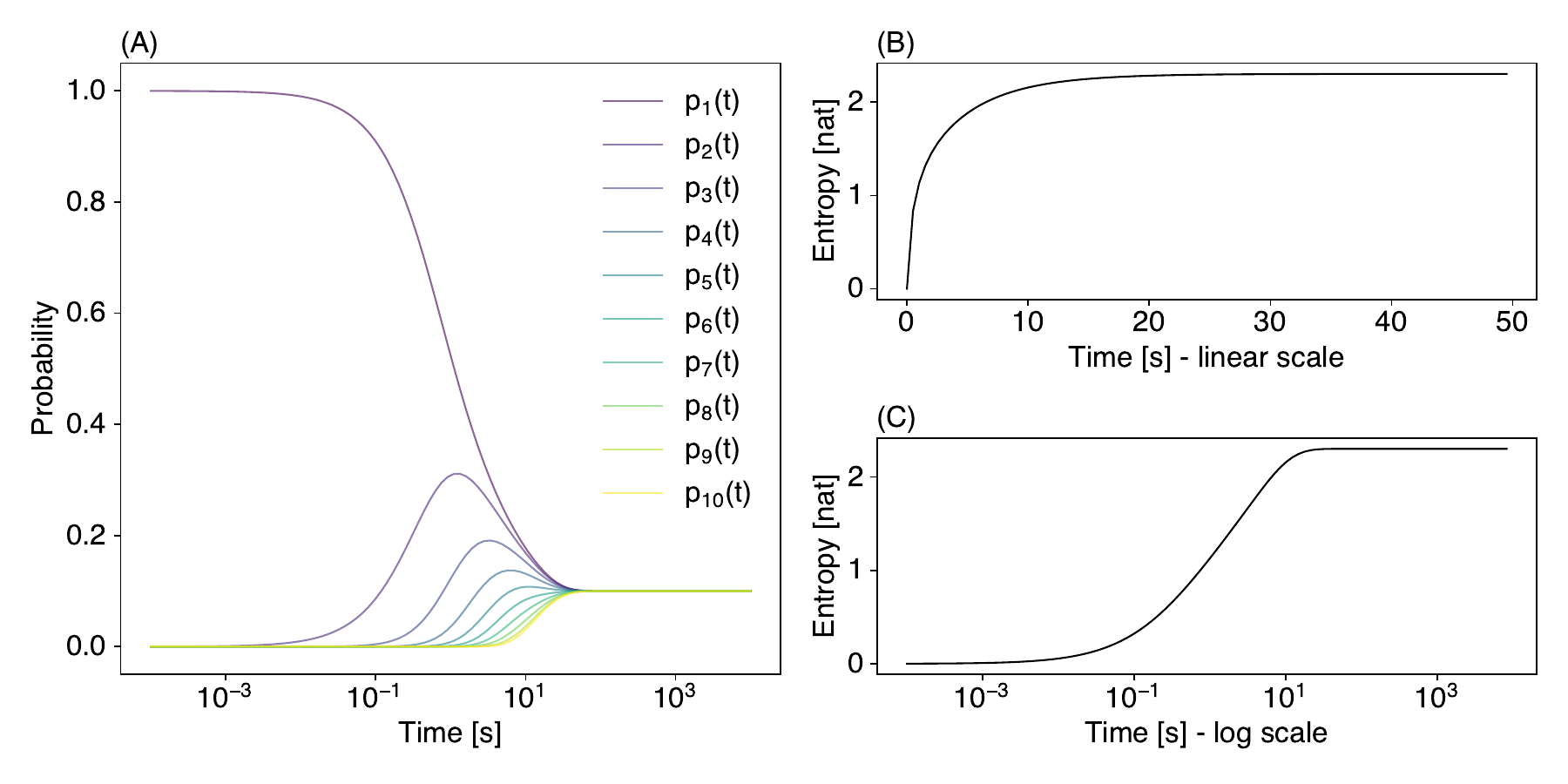}
      \caption{Heat diffusion on a path graph with 10 nodes with initial condition $\p(0) = \delta_1$. (A) Initially, all the transition probability is concentrated on the source node itself; over time, the heat diffuses. At stationarity, transition to any node is equally likely, but convergence to the limit value is not necessarily monotonic. (B) Conditional entropy of the process conditioned on the $\delta_1$ initial condition. The line shows the evolution of the conditional entropy on a linear time scale. The entropy starts at a value 0 and grows monotonically towards an asymptotic value, which is reached at stationarity. (C) The line shows the same entropy curve but on a logarithmic time scale and displays a typical logistic-like growth.}
\end{figure}

\subsection{Thermodynamics of Graph-Diffusion} \label{subsec:Thermodynamics}
The first law of thermodynamics asserts that the total energy is conserved in thermodynamic processes. Similarly, we can say that Markov chains on finite state spaces are also conservative, as $\sum_{i=1}^N p_i(t) = \sum_{i=1}^N p_i(0) = 1$ holds trivially for every $t$. 
The choice of the correct entropic quantity is key to deriving a thermodynamic formalism for networks that respects an equivalent of the second law of thermodynamics and is therefore coherent with our intuition. For instance, directly measuring the entropy of the state vector would not necessarily guarantee this property \cite{cover_elements_2006, halliwell_physical_1996}. On the other hand, choosing to measure directly the KL divergence of the process from the stationary distribution would always be monotonically decreasing \cite{cover_elements_2006}; however, for initial conditions close to stationarity, this would lead to a dull system, uninformative of the underlying graph structure, and therefore practically of little interest.\\
Here, we suggest the use of conditional entropy as an entropic measure that satisfies the information theoretical version of the second law of thermodynamics. The conditional entropy at time $0$ is trivially equal to zero, regardless of the initial distribution on which we are conditioning, and \cite[Ch. 4.4]{cover_elements_2006} outlines a proof of the monotonic growth for the conditional entropy of discrete-time Markov chains when the process starts from the stationary distribution. In that case for a Markov chain $\{ X(t) \}_{t \in \mathbb{R}^+}$ we have:
      \begin{equation}
          X(0) \text{ stationary}, 0 \leqslant t_1 \leqslant t_2 \implies H(X(t_2) | X(0)) \geqslant H(X(t_1) | X(0)).
      \end{equation}
We provide an original argument based on the connection between conditional entropy and the KL divergence. Let us denote by $p(0,t)$ the joint distribution of the dynamic at times $0$ and $t$, and by $p(0) \otimes p(t)$ the product distribution of the laws of $X(0)$ and $X(t)$. Then the chain rule of Shannon entropy and the definition of mutual information \cite{cover_elements_2006} imply that
\begin{equation} \label{eq:KL_conditionalH}
    H(X(t) | X(0)) = H(X(t)) - \dkl(p(0,t) \| p(0) \otimes p(t)).
\end{equation}
We can therefore interpret the conditional entropy of a diffusion process as the interplay between the entropy of the variable $X(t)$ converging to the stationary distribution and its decreasing dependence on the initial condition $X(0)$.

Since the process is stationary, the first term is constant. The data-processing inequality for KL divergence \cite[Lemma 3.11]{csiszar_information_2011} implies that the second term is non-increasing, hence \cref{eq:KL_conditionalH} holds.

Additionally, in the case of heat diffusion, monotonicity holds independently of the initial condition. We could prove it with \cref{eq:KL_conditionalH} by combining again based on the data-processing inequality with the fact that $H(Z(t))$ is a non-decreasing function independently from $p(0)$. However, we present an alternative argument, closer to the proof given in \cite{cover_elements_2006}. Our approach highlights that convergence happens row-wise, which is conceptually interesting on its own and might be potentially useful for other proofs as well, and holds under weaker assumptions: it can be a more generic (potentially inhomogeneous) Markov chain, as long as its stationary distribution is a uniform one. The key result used in the proof is again a version of the data-processing inequality for KL divergence; which we put in \cref{app:DPI} for completeness.

\begin{theorem} [Second Law of Thermodynamics] \label{Thm:2ndlaw}
    Let $\{ X(t) \}_{t \in \mathbb{R}^+}$be a Markov chain, and let the uniform distribution $\matr{\pi}$ be its stationary distribution. Then we have:
      \begin{equation}
          \forall X(0), t_1 \leqslant t_2: H(X(t_2) | X(0)) \geqslant H(X(t_1) | X(0)).
      \end{equation}
\end{theorem} 

\begin{proof}
We use vector-matrix bold notation and set $\tau \coloneqq t_2-t_1 \geq 0$ and define
$\p^{(i)}(t_1) \coloneqq \T_{i,:}(0,t_1)$,
that is, the $i$-th row of the transition matrix from time $0$ to time $t_1$.
Then, by the semigroup property of the Markov chain,
$ \p^{(i)}(t_2) = \T_{i,:}(0,t_1)\T(t_1,t_2) = \T_{i,:}(0,t_2)$.
Applying the data-processing inequality from \cref{lem:DPI_markov_kernel} with $ \p(t_1)=\p^{(i)}(t_1)$ and $\matr{q}(t_1)=\boldsymbol{\pi}$, we obtain
\begin{equation}
\label{eq:KL_row_decrease}
    \dkl\!\left(
        \p^{(i)}(t_2)\,\|\,\boldsymbol{\pi}
    \right)
    \leq
    \dkl\!\left(
        \p^{(i)}(t_1)\,\|\,\boldsymbol{\pi}
    \right),
\end{equation}
or equivalently,
\begin{equation}
\label{eq:KL_row_decrease_transition}
    \dkl\!\left(
        \T_{i,:}(0,t_2)\,\|\,\boldsymbol{\pi}
    \right)
    \leq
    \dkl\!\left(
        \T_{i,:}(0,t_1)\,\|\,\boldsymbol{\pi}
    \right).
\end{equation}

We now relate this KL divergence to the entropy of the $i$-th row of the transition matrix.
Since $\pi_j = 1/N$ for every $j$, we have
\begin{equation*}
    \dkl\!\left(
        \T_{i,:}(0,t)\,\|\,\boldsymbol{\pi}
    \right)
    =
    - H_i(t\mid 0) -\sum_{j=1}^N T_{ij}(0,t)
    \log(\pi_j) =
    - H_i(t\mid 0) + \log (N);
\end{equation*}
thus
\begin{equation}
\label{eq:entropy_KL_identity}
    H_i(t\mid 0)
    =
    \log (N)
    -
    \dkl\!\left(
        \T_{i,:}(0,t)\,\|\,\boldsymbol{\pi}
    \right).
\end{equation}
Combining \cref{eq:KL_row_decrease_transition,eq:entropy_KL_identity} gives
\begin{equation}
    H_i(t_2\mid 0)
    \geq
    H_i(t_1\mid 0)
\end{equation}
for every $i$. Finally, by definition, $H(X(t)\mid X(0))
    =
    \sum_i p_i(0) H_i(t\mid 0).$
Averaging the inequalities $H_i(t_2\mid 0) \geq H_i(t_1\mid 0)$
with respect to the initial distribution $p(0)$ gives $H(X(t_2)\mid X(0)) \geq H(X(t_1)\mid X(0))$.
Therefore, $H(X(t)\mid X(0))$ is non-decreasing in time.
\end{proof}

We conclude this section by presenting a counterexample to the possibility of extending \cref{Thm:2ndlaw} to general chains, not only heat diffusion. Relation \cref{eq:KL_conditionalH} tells us where to look for such a counterexample.
\begin{proposition}
    Consider the Markov chain with $N$ states determined by the transition matrix
    $$
    \T(0,t) \coloneqq\begin{bmatrix} 1 & 0 & \dots & 0 \\ 1-e^{-t} & e^{-t} &  & \vdots \\ \vdots &  & \ddots & 0 \\ 1-e^{-t} & 0 & \dots & e^{-t} \end{bmatrix},
    $$ and the initial condition $\p(0) \coloneqq [0, \frac{1}{N-1}, \dots, \frac{1}{N-1}]$. Then $H(X(t) |X(0))$ is not a monotonically non-decreasing function.
\end{proposition}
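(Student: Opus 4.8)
The plan is to evaluate $H(X(t)\mid X(0))$ in closed form and then exhibit a time interval on which it strictly decreases. First I would exploit the chosen initial condition: since $p_1(0)=0$, the $i=1$ term drops out of \cref{eq:conditionalH_rows}, so only the rows $i\geqslant 2$ of $\T(0,t)$ enter the sum. Each such row has exactly two nonzero entries, $T_{i,1}(0,t)=1-e^{-t}$ and $T_{i,i}(0,t)=e^{-t}$, so, writing $h(a)\coloneqq -a\log a-(1-a)\log(1-a)$ for the binary entropy, the $i$-th row entropy is
\begin{equation*}
H_i(t\mid 0) = -(1-e^{-t})\log(1-e^{-t}) - e^{-t}\log(e^{-t}) = h(e^{-t}),
\end{equation*}
which is the same for every $i\geqslant 2$.

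Second, because $\p(0)$ assigns mass $\tfrac{1}{N-1}$ to each of the $N-1$ states $i\geqslant 2$ and these row entropies are all equal, averaging collapses the weighted sum and yields the exact identity $H(X(t)\mid X(0)) = h(e^{-t})$. The whole problem thus reduces to the monotonicity of a single one-variable function.

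Third, I would analyse $t\mapsto h(e^{-t})$. Setting $a=e^{-t}$, as $t$ increases over $(0,\infty)$ the argument $a$ decreases monotonically from $1$ to $0$, while $h$ is the standard binary entropy with $h(0)=h(1)=0$, increasing on $(0,\tfrac12)$, decreasing on $(\tfrac12,1)$, and maximal at $a=\tfrac12$. Consequently $H(X(t)\mid X(0))$ starts at $0$, rises to $\log 2$ at $t=\log 2$ (where $e^{-t}=\tfrac12$), and then decreases back toward $0$; in particular it is strictly decreasing for all $t>\log 2$, which is exactly the desired violation of monotone non-decrease.

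I do not expect a genuinely hard step here; the only thing worth checking, so that the counterexample is legitimate, is that the given family $\T(0,t)$ is a bona fide time-homogeneous Markov semigroup. A one-line block computation confirms $\T(0,s)\,\T(0,t)=\T(0,s+t)$ — for $i\geqslant 2$ one has $T_{i,1}(0,s)+T_{i,i}(0,s)\,T_{i,1}(0,t)=1-e^{-(s+t)}$ and $T_{i,i}(0,s)\,T_{i,i}(0,t)=e^{-(s+t)}$ — so the matrices arise from the generator with $Q_{i,1}=1$, $Q_{i,i}=-1$ for $i\geqslant 2$ and zero first row. Conceptually this is precisely the situation flagged by \cref{eq:KL_conditionalH}: state $1$ is absorbing, so all mass is eventually swallowed and the marginal $H(X(t))$ is itself non-monotone, which is the mechanism that breaks monotonicity once the chain is no longer the uniform-stationary heat diffusion of \cref{Thm:2ndlaw}.
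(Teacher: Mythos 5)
Your proof is correct, and it reaches the same closed form as the paper, $H(X(t)\mid X(0))=-(1-e^{-t})\log(1-e^{-t})+te^{-t}=h(e^{-t})$, but by a different route. The paper uses the decomposition of \cref{eq:KL_conditionalH}, computing $H(X(t))$ and the mutual-information term $\dkl(p(0,t)\,\|\,p(0)\otimes p(t))=e^{-t}\log(N-1)$ separately and subtracting; it then concludes non-monotonicity indirectly, by noting that the function is not identically zero yet tends to $H(X(0)\mid X(0))=0$ as $t\to\infty$, so it must decrease somewhere. You instead compute the conditional entropy directly from \cref{eq:conditionalH_rows}: every row $i\geqslant 2$ contributes the same binary entropy $h(e^{-t})$, and the weights sum to one, which collapses the average immediately. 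Your analysis of $t\mapsto h(e^{-t})$ is also sharper than the paper's limit argument, since you locate the maximum at $t=\log 2$ and establish strict decrease on $(\log 2,\infty)$ rather than mere existence of a decreasing interval. Your verification of the semigroup property $\T(0,s)\T(0,t)=\T(0,s+t)$ is a worthwhile addition the paper omits, as it confirms the example is a legitimate continuous-time Markov chain. The paper's approach buys a conceptual explanation of \emph{why} the counterexample works (the non-monotone marginal entropy caused by the absorbing state, as flagged in the discussion preceding the proposition), which you recover only in your closing remark; your approach buys a shorter, fully elementary, and more quantitative argument.
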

\begin{proof}
    One can easily compute that
    \begin{equation}
        H(X(t)) = (e^{-t}-1) \log(1-e^{-t}) + (\log(N-1) + t) e^{-t},
    \end{equation}
    and
    \begin{equation}
        \dkl(p(0,t) \| p(0) \otimes p(t)) = e^{-t} \log(N-1).
    \end{equation}
    Then, by \cref{eq:KL_conditionalH} we get \begin{equation}
        H(X(t)|X(0)) = (e^{-t}-1) \log(1-e^{-t}) +  te^{-t},
    \end{equation} which is not identical to 0. $\lim_{t \to \infty}  H(X(t)|X(0)) =  H(X(0)|X(0)) = 0$, so the function must decrease at a certain point.
\end{proof}

\subsection{Exact Results of the Evolution of Conditional Entropy on some Graphs} \label{subsec:extact_results}
The evolution of the conditional entropy depends on the graph's structure and is encoded in the spectral components of its Laplacian matrix. For certain classes of graphs with certain levels of regularity, the spectrum and eigenspaces are known \cite{van_mieghem_graph_2023}. Here, we derive them again and use them to obtain the eigendecomposition of the transition matrices for heat diffusion on these classes of graphs and obtain exact formulas for the evolution of conditional entropy.
Knowing these decompositions is, in turn, useful for fast computation of entropy curves and for understanding asymptotic behaviours. \\
Given a graph $G$ of size $N$, we always consider the spectrum $\{ 0 = \lambda_1, \dots, \lambda_N \}$ of its Laplacian (or in fewer cases adjacency) matrix to be in ascending order. Additionally, with a slight abuse of notation, we write $\lambda_i (G)$ to indicate the $i$-th eigenvalue of $G$'s Laplacian.

\subsubsection{Complete Graph}
In the case of a complete graph, where each node is connected to every other node in the network, given its extreme regularity, we can explicitly write its transition probability matrix. We first need to compute its Laplacian's eigendecomposition.
\begin{lemma} \label{lemma:spectrumKN}
    Let $\Lapl$ be the Laplacian of the complete graph $K_N$ with $N$ nodes. Then:
    \begin{equation}
        \lambda_1(K_N) = 0, \lambda_2 (K_N) = \dots = \lambda_N(K_N) = N,
    \end{equation} and
    \begin{equation}
        \Lapl = \matr{U} \matr{\Lambda} \matr{U}^{\intercal},
    \end{equation}
    where $\matr{\Lambda} = \mathrm{diag}(0, N, \dots, N) $ and $ U $ is an orthogonal matrix whose first column is $ \frac{1}{\sqrt{N}} \matr{1}_N $, and whose remaining columns form an orthonormal basis for the subspace orthogonal to $ \matr{1}_N $.
\end{lemma}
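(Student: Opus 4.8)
The plan is to avoid computing a characteristic polynomial and instead exploit the rank-one structure of the complete graph's Laplacian. First I would record that in $K_N$ every node has degree $N-1$, so the adjacency matrix is $\matr{A} = \matr{J}_N - \Id_N$ with $\matr{J}_N \coloneqq \matr{1}_N \matr{1}_N^{\intercal}$ the all-ones matrix, and hence
\begin{equation}
    \Lapl = (N-1)\Id_N - \matr{A} = (N-1)\Id_N - (\matr{J}_N - \Id_N) = N\Id_N - \matr{J}_N .
\end{equation}
This reduces the whole problem to the spectrum of the single rank-one matrix $\matr{J}_N$.

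Next I would diagonalize $\matr{J}_N$ by inspection. Since $\matr{J}_N \matr{1}_N = \matr{1}_N(\matr{1}_N^{\intercal}\matr{1}_N) = N\matr{1}_N$, the vector $\matr{1}_N$ is an eigenvector with eigenvalue $N$; and for any $\matr{v}$ orthogonal to $\matr{1}_N$ we get $\matr{J}_N \matr{v} = \matr{1}_N(\matr{1}_N^{\intercal}\matr{v}) = \matr{0}$, so the $(N-1)$-dimensional orthogonal complement of $\matr{1}_N$ is exactly $\ker \matr{J}_N$. Thus $\matr{J}_N$ has eigenvalue $N$ with multiplicity one and eigenvalue $0$ with multiplicity $N-1$, with the two eigenspaces mutually orthogonal. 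Transferring to $\Lapl = N\Id_N - \matr{J}_N$, the eigenvectors are shared and each eigenvalue $\mu$ of $\matr{J}_N$ maps to $N-\mu$; hence $\matr{1}_N$ gives $\lambda = 0$ and every vector orthogonal to $\matr{1}_N$ gives $\lambda = N$. Ordering ascendingly yields $\lambda_1(K_N)=0$ and $\lambda_2(K_N)=\dots=\lambda_N(K_N)=N$, as claimed.

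To obtain the decomposition $\Lapl = \matr{U}\matr{\Lambda}\matr{U}^{\intercal}$, I would invoke the spectral theorem for the symmetric matrix $\Lapl$: take $\frac{1}{\sqrt{N}}\matr{1}_N$ as the first column of $\matr{U}$, and fill the remaining columns with any orthonormal basis of the orthogonal complement of $\matr{1}_N$ (for instance via Gram–Schmidt, or a closed-form choice such as rows of a DFT/Hadamard-type matrix). Because the eigenspaces for $\lambda=0$ and $\lambda=N$ are orthogonal, the resulting $\matr{U}$ is orthogonal, and collecting the associated eigenvalues on the diagonal gives $\matr{\Lambda}=\mathrm{diag}(0,N,\dots,N)$.

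The argument is essentially a one-line reduction to a rank-one perturbation of the identity, so I do not expect a genuine obstacle. The only point requiring care is the bookkeeping in writing $\matr{A}=\matr{J}_N-\Id_N$, i.e. remembering that $K_N$ has no self-loops; this is precisely what produces the constant shift $N\Id_N$ and therefore the single zero eigenvalue, consistent with the graph being connected.
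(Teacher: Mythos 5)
Your proof is correct and follows essentially the same route as the paper's: both rewrite $\Lapl = N\Id_N - \matr{J}_N$ and exploit the rank-one structure of $\matr{J}_N$ to conclude that $0$ is simple with eigenvector $\matr{1}_N$ and $N$ has multiplicity $N-1$ on the orthogonal complement. Your version is slightly more explicit in diagonalizing $\matr{J}_N$ directly, but there is no substantive difference.
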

\begin{proof}
    Consider the all-ones vector $\matr{1}_N \in \mathbb{R}^N$ and the all-ones matrix $\matr{J}_N = \matr{1}_N \matr{1}_N^{\intercal}$.
    Notice how  $\matr{1}_N$ belongs to the right kernel of the Laplacian of each graph (and in particular $K_N$) and that 
    $ \Lapl = N \Id_N - \matr{J}_N $, where $\Id_N$ is the $N$-dimensional identity matrix. The rank of $\matr{J}$ is equal to 1, so we can conclude that, except for 0, every other eigenvalue of $\Lapl$ is equal to $N$ (which therefore has multiplicity $N-1$), a fact established already in \cite{anderson_eigenvalues_1971}. \\
    The spectral decomposition of $ \Lapl $ is then $ \Lapl = \matr{U} \matr{\Lambda} \matr{U}^{\intercal},$ where $ \matr{\Lambda} = \mathrm{diag}(0, N, \dots, N) $ and $ \matr{U} $ is an orthogonal matrix whose first column is $ \frac{1}{\sqrt{N}} \matr{1}_N $, and whose remaining columns form an orthonormal basis for the subspace orthogonal to $ \matr{1}_N $.
\end{proof}
We can now get the formula of the transition matrix.
\begin{proposition}
    Let $\{Z(t)\}$ be the heat diffusion dynamic on the complete graph $K_N$ with $N$ nodes. Then its transition matrices are given by:
    \newcommand{\theatone}{\frac{1}{N} + \frac{N-1}{N} e^{- N t}}
    \newcommand{\theattwo}{\frac{1}{N} - \frac{1}{N}e^{- N t}}
    
    \begin{equation} \label{eq:CTHeat_complete}
        T_{i,j}(t) = \begin{cases}
        \theatone & \text{if } i = j,\\
       \theattwo & \text{if } i \neq j.
    \end{cases}
    \end{equation}
\end{proposition}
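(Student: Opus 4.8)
The plan is to compute the transition matrix directly from its definition $\T(0,t) = e^{-\Lapl t}$ (recall that for heat diffusion $\matr{Q} = -\Lapl$), using the eigendecomposition established in \cref{lemma:spectrumKN}. The key observation is that the Laplacian of $K_N$ has only two distinct eigenvalues, $0$ and $N$, so the matrix exponential collapses into a combination of just two spectral projectors.

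First I would write $\Lapl = 0\cdot \matr{P}_0 + N\cdot\matr{P}_1$, where $\matr{P}_0 = \frac{1}{N}\matr{J}_N$ is the orthogonal projector onto $\ker\Lapl = \operatorname{span}(\matr{1}_N)$ and $\matr{P}_1 = \Id_N - \frac{1}{N}\matr{J}_N$ is the projector onto its orthogonal complement (the eigenspace for the eigenvalue $N$). These are exactly the column blocks of $\matr{U}\matr{\Lambda}\matr{U}^{\intercal}$ from the lemma, regrouped by eigenvalue. Since $\matr{P}_0$ and $\matr{P}_1$ are complementary orthogonal projectors, functional calculus gives
\begin{equation}
\T(0,t) = e^{-\Lapl t} = e^{0}\matr{P}_0 + e^{-Nt}\matr{P}_1 = \frac{1}{N}\matr{J}_N + e^{-Nt}\Bigl(\Id_N - \frac{1}{N}\matr{J}_N\Bigr).
\end{equation}

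Then I would simply read off the entries. The all-ones matrix $\matr{J}_N$ has every entry equal to $1$, and $\Id_N$ contributes only on the diagonal, so the diagonal entries evaluate to $\frac{1}{N} + e^{-Nt}\bigl(1 - \frac{1}{N}\bigr) = \frac{1}{N} + \frac{N-1}{N}e^{-Nt}$, while the off-diagonal entries evaluate to $\frac{1}{N} + e^{-Nt}\bigl(0 - \frac{1}{N}\bigr) = \frac{1}{N} - \frac{1}{N}e^{-Nt}$, which is precisely \cref{eq:CTHeat_complete}.

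An equivalent route that avoids invoking functional calculus is to exponentiate $\Lapl = N\Id_N - \matr{J}_N$ termwise: since $\Id_N$ and $\matr{J}_N$ commute and $\matr{J}_N^{k} = N^{k-1}\matr{J}_N$ for $k\geqslant 1$, the series sums to $e^{-\Lapl t} = e^{-Nt}\Id_N + \frac{1}{N}(1-e^{-Nt})\matr{J}_N$, giving the same entries. There is no genuine obstacle here beyond bookkeeping; the only point requiring a little care is correctly identifying the spectral projectors (equivalently, correctly summing $\sum_{k} \frac{t^{k}}{k!}\matr{J}_N^{k}$), and it is worth sanity-checking that the resulting matrix is row-stochastic and converges to $\frac{1}{N}\matr{J}_N$ as $t\to\infty$, consistent with the stationary distribution $\matr{\pi}$.
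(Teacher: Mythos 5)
Your proposal is correct and follows essentially the same route as the paper: both decompose $e^{-t\Lapl}$ using the complementary spectral projectors $\frac{1}{N}\matr{J}_N$ onto $\operatorname{span}(\matr{1}_N)$ and $\Id_N - \frac{1}{N}\matr{J}_N$ onto its orthogonal complement, arriving at $e^{-t\Lapl} = \frac{1}{N}\matr{J}_N + e^{-Nt}\bigl(\Id_N - \frac{1}{N}\matr{J}_N\bigr)$ and reading off the entries. Your supplementary series argument via $\matr{J}_N^{k} = N^{k-1}\matr{J}_N$ and the row-stochasticity sanity check are welcome additions but not needed.
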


\begin{proof}    
    Based on the preceding lemma, the matrix exponential form is
    \begin{equation*}
        e^{-t\Lapl} = \matr{U} e^{-t \matr{\Lambda}} \matr{U}^{\intercal} = \matr{U} \, \mathrm{diag}(1, e^{-tN}, \dots, e^{-tN}) \, \matr{U}^{\intercal}.
    \end{equation*}
    We now define the projection matrix onto the span of $ \matr{1}_N $ as $\matr{P} = \frac{1}{N} \matr{J}_N$,
    and observe that $ \Id_N = \matr{P} + (\Id_N - \matr{P}) $. Since $ \matr{P} $ projects onto the eigenspace of $ \lambda = 0 $, and $ \Id_N - \matr{P} $ projects onto the eigenspace of $ \lambda = N $, we obtain
    \begin{equation*}
        e^{-t \Lapl} = \matr{P} + e^{-tN} (\Id_N - \matr{P}).
    \end{equation*}
    Expanding this expression yields the final result:
    \begin{equation}
        e^{-t\Lapl} = e^{-tN} \Id_N + \left(1 - e^{-tN}\right) \frac{1}{N} \matr{J}_N,
        \label{eq:T_complete}
    \end{equation}
    whose entries are specified in \cref{eq:CTHeat_complete}.
\end{proof}
We now state the explicit formula of the conditional entropy of the heat process. The computation of the formula can be found in \cref{appendix:approx_complete}, and the analogous formulas for transition matrices and conditional entropy of random walk diffusion are in \cref{eq:exactRWcomplete}.

\begin{corollary}
    The conditional entropy $H(Z (t) | Z(0))$ on the complete graph is independent of the choice of the initial condition $\p(0)$ and is given by
        \begin{equation} \label{eq:exactHcomplete}
            \begin{aligned}
                 H(Z(t) | Z(0)) &= \log(N) 
                - \frac{N-1}{N} e^{-  N t} 
                \log\left(\frac{1 + (N-1) e^{- N t}}{1 - e^{- N t}}\right) \\
                &- \frac{N-1}{N} \log\left(1 - e^{- N t}\right) 
                - \frac{1}{N} \log\left(1 + (N-1) e^{- N t}\right)
                \end{aligned}
        \end{equation}
\end{corollary}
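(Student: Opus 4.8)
The plan is to reduce the whole computation to a single row entropy by exploiting the permutation symmetry of $K_N$. By \cref{eq:CTHeat_complete}, every row of the transition matrix consists of exactly one diagonal entry $a = \frac{1}{N} + \frac{N-1}{N}e^{-Nt}$ together with $N-1$ off-diagonal entries, each equal to $b = \frac{1}{N} - \frac{1}{N}e^{-Nt}$. Hence $H_i(t|0) = -a\log a - (N-1)b\log b$ takes the same value for every $i$; call this common value $h(t)$. Since $\sum_i p_i(0) = 1$, the definition \cref{eq:conditionalH_rows} gives $H(Z(t) | Z(0)) = \sum_i p_i(0)\, h(t) = h(t)$, which is manifestly independent of the initial condition $\p(0)$. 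This settles the independence claim and reduces the corollary to evaluating $h(t)$.

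For the explicit formula I would set $A \coloneqq Na = 1 + (N-1)e^{-Nt}$ and $B \coloneqq Nb = 1 - e^{-Nt}$, so that $\log a = \log A - \log N$ and $\log b = \log B - \log N$. Substituting these into $h(t)$ and using the row-stochasticity identity $a + (N-1)b = 1$ (the same fact verified in the proof of the First Law), the $\log N$ contributions collapse into a single $+\log N$ term, leaving
\begin{equation*}
    h(t) = \log N - \frac{A}{N}\log A - \frac{(N-1)B}{N}\log B .
\end{equation*}
It then remains to check that this agrees with \cref{eq:exactHcomplete}. Expanding the ratio there via $\log\!\left(\frac{A}{B}\right) = \log A - \log B$ and collecting coefficients, one finds the coefficient of $\log A$ to be $-\frac{N-1}{N}e^{-Nt} - \frac{1}{N} = -\frac{A}{N}$ and that of $\log B$ to be $\frac{N-1}{N}e^{-Nt} - \frac{N-1}{N} = -\frac{(N-1)B}{N}$, reproducing the displayed $h(t)$ exactly.

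There is no genuine analytic obstacle: once the symmetry argument collapses the sum over rows, everything is elementary algebra in the two quantities $A$ and $B$. The only point requiring care is the bookkeeping of the logarithms — specifically, recognizing that the published grouping, which packages part of the answer as $\frac{N-1}{N}e^{-Nt}\log\!\left(\frac{A}{B}\right)$, is merely a regrouping of the natural two-term expression $-\frac{A}{N}\log A - \frac{(N-1)B}{N}\log B$, while keeping the $\log N$ factor isolated throughout. I would therefore present the calculation by first deriving the symmetric two-term form and only then rewriting it into the ratio form of \cref{eq:exactHcomplete}.
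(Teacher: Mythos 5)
Your proposal is correct and follows essentially the same route as the paper's own computation in the appendix: both substitute the entries of \cref{eq:CTHeat_complete} into \cref{eq:conditionalH_rows}, use the fact that every row contributes the identical entropy so that the weights $p_i(0)$ sum out, and then regroup the logarithms (your check that the coefficients of $\log A$ and $\log B$ are $-A/N$ and $-(N-1)B/N$ is accurate and matches the stated formula). Your explicit isolation of the symmetry step and the $A,B$ bookkeeping is, if anything, a cleaner presentation of the same argument.
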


\subsubsection{Path Graphs}
For the path graph $P_N$ with vertices $V=\{1,\dots,N\}$ and edges
$(i,i+1)$ for $1\le i\le N-1$ the Laplacian matrix $\Lapl$ is
\begin{equation}
    \Lapl_{i,j}= \begin{cases}
1 & \text{ if } i=j\in\{1,N\},\\
2 & \text{ if } 1<i<N,\; i=j,\\
-1 & \text{ if } |i-j|=1,\\
0 & \text{otherwise}.
\end{cases}
\end{equation}
We now state its eigendecomposition.
\begin{proposition}
    Let $\Lapl$ be the Laplacian of the path graph $P_N$ with $N$ nodes. Then its spectrum is given by the set of eigenpairs $\{ \lambda_k, v^{(k)}\}_{1 \leqslant k \leqslant N}$ where
    \begin{equation} \label{eq:spectrum_path}
        \lambda_k = 2 \left( 1-\cos \left( \frac{\pi(k-1)}{N}\right) \right),
    \end{equation} and
    \begin{equation} \label{eq:evectors_path}
        \forall 1 \leqslant j \leqslant N: v_j^{(k)} = \sqrt{\frac{2-\delta_{k1}}{N}}\, \cos \left( \frac{\pi(k-1) \left( j-\frac{1}{2} \right) }{N} \right).
    \end{equation}
\end{proposition}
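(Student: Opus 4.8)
The plan is to verify directly that the claimed pairs $\{\lambda_k, v^{(k)}\}$ solve the eigenvalue equation $\Lapl v^{(k)} = \lambda_k v^{(k)}$, and then to check that the resulting vectors form an orthonormal basis. Writing $\theta_k \coloneqq \pi(k-1)/N$, the proposed eigenvector has entries proportional to $\cos(\theta_k(j-\tfrac12))$, so the whole argument reduces to trigonometric identities. I would organize the computation around the three distinct types of rows of $\Lapl$: the two boundary rows $j\in\{1,N\}$, where the diagonal entry is $1$, and the interior rows $1<j<N$, where it is $2$.

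For an interior index $j$ the eigenvalue equation reads $2v_j^{(k)} - v_{j-1}^{(k)} - v_{j+1}^{(k)} = \lambda_k v_j^{(k)}$. Dropping the normalization and applying the product-to-sum identity $\cos(\alpha-\beta)+\cos(\alpha+\beta)=2\cos\alpha\cos\beta$ with $\alpha = \theta_k(j-\tfrac12)$ and $\beta=\theta_k$ gives $v_{j-1}^{(k)}+v_{j+1}^{(k)} = 2\cos(\theta_k)\,v_j^{(k)}$, so the left-hand side collapses to $2(1-\cos\theta_k)v_j^{(k)}$. This immediately yields the claimed eigenvalue $\lambda_k = 2(1-\cos\theta_k)$ and confirms every interior equation, independently of the phase.

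The part that pins down the phase $j-\tfrac12$ — and which I expect to be the main obstacle — is matching the two boundary rows, since there $\Lapl$ enforces $(\Lapl v^{(k)})_1 = v_1^{(k)}-v_2^{(k)}$ and $(\Lapl v^{(k)})_N = v_N^{(k)}-v_{N-1}^{(k)}$ rather than the interior stencil. I would rewrite each difference using $\cos A-\cos B=-2\sin\tfrac{A+B}{2}\sin\tfrac{A-B}{2}$ together with the half-angle relations $1-\cos\theta_k=2\sin^2(\theta_k/2)$ and $\sin\theta_k=2\sin(\theta_k/2)\cos(\theta_k/2)$; at $j=1$ both sides reduce to $4\sin^2(\theta_k/2)\cos(\theta_k/2)$. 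The node $j=N$ is more delicate, because the argument $\theta_k(N-\tfrac12)$ leaves the principal range, so I would invoke $\theta_k N = \pi(k-1)$ to kill the spurious factors via $\sin(\pi(k-1))=0$ and $\cos(\pi(k-1))=(-1)^{k-1}$, after which the common sign $(-1)^{k-1}$ cancels across the identity. This is exactly where the $\tfrac12$ shift is essential: it is the discrete encoding of a Neumann (reflecting) boundary condition, which is why these come out as discrete-cosine (DCT-II) vectors rather than the pure sines produced by Dirichlet endpoints.

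Finally I would settle the basis and normalization. Since $\cos$ is strictly decreasing on $[0,\pi)$ and the arguments $\theta_k$ are distinct for $k=1,\dots,N$, the eigenvalues $\lambda_k$ are pairwise distinct; symmetry of $\Lapl$ then makes the $v^{(k)}$ automatically orthogonal and a basis of $\mathbb{R}^N$, so no separate orthogonality computation is needed. For the normalization I would compute $\sum_{j=1}^N \cos^2(\theta_k(j-\tfrac12)) = \tfrac{N}{2} + \tfrac12\sum_{j=1}^N\cos\!\big(2\theta_k(j-\tfrac12)\big)$, where the trailing sum is a cosine sum in arithmetic progression equal to $\sin(2\theta_k N)/(2\sin\theta_k) = \sin(2\pi(k-1))/(2\sin\theta_k)=0$ for $k\ge2$, giving squared norm $N/2$; for $k=1$ the vector is constant with squared norm $N$. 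This reproduces precisely the factor $\sqrt{(2-\delta_{k1})/N}$ and completes the verification.
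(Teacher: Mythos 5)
Your proof is correct and follows essentially the same route as the paper's: the same ansatz $v_j \propto \cos\left(\theta_k\left(j-\tfrac12\right)\right)$, the same product-to-sum identity for the interior stencil, and the same half-angle manipulations at the endpoints. The only organizational difference is that the paper \emph{derives} the quantization $\theta_k=\pi(k-1)/N$ from the right-endpoint condition (obtaining $\cos((N+\tfrac12)\theta)=\cos((N-\tfrac12)\theta)$ and solving), whereas you \emph{verify} those values directly and get completeness from distinctness of the eigenvalues plus symmetry of $\Lapl$; you also carry out the normalization computation (squared norm $N/2$ for $k\ge 2$ and $N$ for $k=1$) that the paper only asserts.
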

\begin{proof}
    We look for non-zero $\matr{v}=(v_1,\dots,v_N)^{\intercal}$ and $\lambda$ such that
    $\Lapl \matr{v}=\lambda\matr{v}$. For interior indices $j\in\{2,\dots,N-1\}$ we have
    \begin{equation}
    -\,v_{j-1}+2v_j-v_{j+1}= \lambda v_j,
    \end{equation}
    while the endpoints satisfy
    \begin{equation} \label{eq:endpoint_path}
    v_1-v_2=\lambda v_1,\qquad v_N-v_{N-1}=\lambda v_N.
    \end{equation}
    Set $v_j=\cos \left((j-\frac{1}{2})\theta\right)$. Using
    $\cos(x-\theta)+\cos(x+\theta)=2\cos (x) \cos(\theta)$ with $x=(j-\frac{1}{2})\theta$,
    the interior equation gives
    \begin{equation*}
    -\,\cos(x-\theta)+2\cos(x) - \cos(x+\theta)
    =2(1-\cos(\theta))\cos(x);
    \end{equation*}
    hence
    \begin{equation}\label{eq:lambda-theta}
    \lambda=2\left(1-\cos(\theta) \right).
    \end{equation}
    With $v_1=\cos(\frac{\theta}{2})$ and $v_2=\cos(\frac{3\theta}{2})$, the left endpoint equation of \cref{eq:endpoint_path} becomes
    \begin{equation*}
    v_1-v_2
    =\cos \left(\frac{\theta}{2}\right)-\cos \left( \frac{3\theta}{2} \right)
    =2\sin \left( \theta \right) \sin \left(\frac{\theta}{2}\right)
    =4\sin^2 \left(\frac{\theta}{2}\right)\cos \left(\frac{\theta}{2}\right)
    =2(1-\cos \left(\theta \right) ) v_1.
    \end{equation*}
    The left boundary condition is therefore automatically satisfied once \cref{eq:lambda-theta} holds.
    \\
    Quantization of the spectrum is achieved by imposing the right endpoint equation of \cref{eq:endpoint_path}. Let $\alpha \coloneqq(N-\frac{1}{2})\theta$. Then
    $v_N=\cos(\alpha)$ and $v_{N-1}=\cos(\alpha-\theta)$, and the boundary condition reads
    \begin{equation}
    \cos (\alpha)-\cos(\alpha-\theta)=2(1-\cos(\theta)) \cos(\alpha).
    \end{equation}
    Using $\cos(\alpha-\theta)=\cos (\alpha)\cos(\theta)+\sin (\alpha)\sin(\theta)$ and rearranging,
    \begin{align*}
    0 & =(1-\cos(\theta))\cos (\alpha)+\sin (\alpha)\sin(\theta)
    =\cos (\alpha)-\left(\cos(\theta)\cos (\alpha)-\sin(\theta)\sin (\alpha)\right) \\
     &= \cos (\alpha)-\cos(\alpha+\theta).
    \end{align*}
    Hence,
    \begin{equation}
    \cos \left( \left(N+\frac{1}{2} \right)\theta\right)=\cos \left( \left(N-\frac{1}{2} \right) \theta\right).
    \end{equation}
    Therefore either $(N+\frac{1}{2})\theta=(N-\frac{1}{2})\theta+2\pi m$, which on $[0,\pi]$
    gives only $\theta=0$ (the zero eigenvalue), or
    $(N+\frac{1}{2})\theta=-(N-\frac{1}{2})\theta+2\pi m$, i.e.
    \begin{equation}
    2N \theta=2\pi m \Longrightarrow  \theta=\frac{\pi m}{N}.
    \end{equation}
    The choice $m=N$ gives the zero vector, so we take $m=0,1,\dots,N-1$ and index them as
    $k=1,\dots,N$ with
    \begin{equation}
    \theta_k=\frac{\pi(k-1)}{N}.
    \end{equation}
    
    Substituting $\theta_k$ into \eqref{eq:lambda-theta} yields
    \begin{equation}
    \lambda_k=2\left(1-\cos\frac{\pi(k-1)}{N}\right),
    \end{equation}
    and the corresponding eigenvector components are
    \begin{equation}
    v^{(k)}_j=\cos\left(\frac{\pi(k-1)}{N}\left(j-\frac{1}{2}\right)\right),
    j=1,\dots, N,
    \end{equation}
    which may be normalized to match \eqref{eq:evectors_path}.
\end{proof}
The vectors $\{\matr{v}^{(k)}\}_{k=1}^N$ are orthonormal and constitute the (type-II) discrete cosine transform (DCT-II) basis \cite{ahmed_discrete_1974}.
\\
Let $\matr{U}=[\matr{v}^{(1)}\;|\;\cdots\;|\;\matr{v}^{(N)}]$ and
$\matr{D}(t)=\operatorname{diag}  \left( e^{-\lambda_1 t}, \dots, e^{-\lambda_N t} \right) $.  
The heat-diffusion transition matrix is then:
\begin{equation}
    \T(t)=e^{-t \Lapl}=\matr{U} \matr{D}(t) \matr{U}^{\intercal}, \text{ with }
    T_{i,j}(t)=\sum_{k=1}^{N}e^{-\lambda_k t}\,v_i^{(k)}\,v_j^{(k)}.
\end{equation}
The complete and path graphs play an important role in the understanding of diffusion, as they represent extremal cases in terms of connectedness. In particular, in \cref{subsec:asymptotic_behaviour} we show that the speed of convergence to stationarity is in some sense maximal for the former and minimal for the latter. We therefore use their entropy curves in our simulation results to bound a region of space where we expect to see entropy curves of heat diffusion on graphs with the same number of nodes.
\subsubsection{Circulant Graphs}
Another case where we can exploit the high regularity of the structure to obtain an explicit solution is that of circulant graphs.
A circulant graph with $N$ nodes has vertex set $V = \{0,1,\dots,N-1\}$ and its structure is then entirely defined by its step-set $S \subseteq \{1,2,\dots,\lfloor N/2\rfloor\}$ indicating at which (regular) intervals there are edges:
\begin{equation*}
    E \;=\;
\left\{\ \{i,j\}\subseteq V \;\bigm|\; j-i \equiv s \pmod{N}\text{ for some }s\in S
\right\}.
\end{equation*}
We write $C_N(S) = (V,E) $ to indicate an undirected circulant graph. We then have that every node $i \in V$ has the same degree:
\begin{equation*}
d =
\begin{cases}
2|S| & \text{ if } N\text{ odd or } N/2\notin S,\\
2|S| -1  & \text{ if } N \text{ even and } N/2\in S.
\end{cases}
\end{equation*}
Now, let $C_N(S)$ be the undirected circulant graph with step–set $S$. Its adjacency matrix is then a circulant matrix \cite{davis_circulant_1994}.  
\begin{equation*}
    \matr{A}= \operatorname{circ}(c_0,\dots,c_{N-1}) \coloneqq
    \begin{bmatrix}
    c_0 & c_1 & \cdots & c_{N-1} \\
    c_{N-1} & c_0 & \cdots & c_{N-2} \\
    \vdots & \vdots & \ddots & \vdots \\
    c_1 & c_2 & \cdots & c_0 \\
\end{bmatrix}
\text{, where } 
     c_j= \begin{cases}
        1 & \text{ if } j\in S\ \text{or}\ N-j\in S, \\
        0 & \text{otherwise}.
    \end{cases}.
\end{equation*}
We then have the following eigendecomposition of its Laplacian.
\begin{proposition}
    The (unordered) spectrum of the Laplacian of $C_N(S)$ is 
    \begin{equation}
    \left\{ \lambda_k = d - 2\sum_{s\in S}\cos \left( \frac{2\pi s (k-1)}{N} \right)\right\}_{1 \leqslant k \leqslant N}.
    \end{equation}
    
\end{proposition}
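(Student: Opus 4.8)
The plan is to leverage the observation, recorded just above the statement, that the adjacency matrix $\matr{A}$ of $C_N(S)$ is circulant, together with the classical fact that every circulant matrix is diagonalized by the discrete Fourier basis irrespective of its generating sequence \cite{davis_circulant_1994}. Writing $\omega = e^{2\pi i/N}$, I would introduce the vectors $\matr{f}^{(k)}$ with entries $f^{(k)}_{j} = \omega^{(j-1)(k-1)}$ for $1 \leqslant j,k \leqslant N$. These $N$ vectors are mutually orthogonal and are common eigenvectors of all $N\times N$ circulant matrices; for $\matr{A} = \operatorname{circ}(c_0,\dots,c_{N-1})$ one has $\matr{A}\matr{f}^{(k)} = \mu_k \matr{f}^{(k)}$ with $\mu_k = \sum_{j=0}^{N-1} c_j\,\omega^{j(k-1)}$. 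The first step is simply to record this standard fact.

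The second step reduces the Laplacian spectrum to the adjacency spectrum. Since $C_N(S)$ is $d$-regular, its degree matrix equals $d\,\Id_N$, so $\Lapl = d\,\Id_N - \matr{A}$. Consequently $\Lapl$ has the very same eigenvectors $\matr{f}^{(k)}$, with eigenvalues $\lambda_k = d - \mu_k$. The problem is thereby reduced to evaluating the single exponential sum $\mu_k$ and showing it equals $2\sum_{s\in S}\cos(2\pi s(k-1)/N)$.

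The third step is the trigonometric simplification. Inserting the explicit coefficients $c_j = 1$ iff $j\in S$ or $N-j\in S$, the sum $\mu_k$ picks up, for each step $s\in S$, a term $\omega^{s(k-1)}$ from the index $j=s$ together with a term $\omega^{(N-s)(k-1)} = \omega^{-s(k-1)}$ from the index $j=N-s$ (using $\omega^N=1$). When $s$ and $N-s$ are distinct indices, these conjugate contributions combine through $\omega^{m}+\omega^{-m} = 2\cos(2\pi s(k-1)/N)$; summing over $s\in S$ and substituting into $\lambda_k = d-\mu_k$ produces the asserted expression.

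I expect the main obstacle to be the careful accounting of the self-inverse step $s=N/2$, which arises precisely when $N$ is even and $N/2\in S$. There the two indices $j=s$ and $j=N-s$ collapse to the single index $N/2$, so the Fourier contribution is the lone term $\omega^{(N/2)(k-1)} = \cos(\pi(k-1))$ rather than a paired $2\cos$; at the same time the degree recorded above drops to $d = 2|S|-1$. The delicate point is to keep the single-versus-paired counting and the degree correction consistent with one another, since this is exactly where the factor-of-two pairing argument used for generic steps ceases to apply. I would isolate the $s=N/2$ summand before pairing and verify its contribution directly, cross-checking against the smallest degenerate instance (e.g. $N=4$ with $S=\{N/2\}$, a perfect matching) to pin down the conventions.
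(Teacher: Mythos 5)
Your proposal follows essentially the same route as the paper: diagonalize the circulant adjacency matrix in the discrete Fourier basis, obtain $\mu_k$ as an exponential sum over the nonzero coefficients $c_j$, and pass to the Laplacian via $\Lapl = d\,\Id_N - \matr{A}$ using $d$-regularity. The one place you go beyond the paper is the $s=N/2$ caveat, and your suspicion is justified: the paper's proof jumps from $\mu_k=\sum_j c_j\omega_N^{-jk}$ to $2\sum_{s\in S}\cos(2\pi sk/N)$ without isolating the self-inverse step, and when $N$ is even with $N/2\in S$ that term contributes only $(-1)^k$ once (since $j=N/2$ is a single index), not $2\cos(\pi k)$; the stated formula therefore overcounts it by a factor of two. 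Your own sanity check exposes this: for $N=4$, $S=\{2\}$ the proposition would give $\lambda_1 = 1-2\cos(0) = -1 < 0$, which is impossible for a Laplacian, whereas the correct spectrum of that perfect matching is $\{0,0,2,2\}$. So your plan is sound and, carried out as described, actually repairs an edge case that the paper's proof and statement silently get wrong for generic step-sets containing $N/2$.
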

\begin{proof}
    Consider the $N$-th dimensional shift matrix $\operatorname{circ}(0,1,0, \dots, 0)$. It is easy to verify that its spectrum is  $\{ \omega_N^{-k} \}_{0 \leqslant k < N}$, where $\omega_N \coloneqq e^{-2\pi i/N}.$
    Consider the discrete Fourier matrix  
    \begin{equation} \label{eq:DFTmatrix}
    \matr{F}=\frac1{\sqrt N}\left[\omega_N^{\,jk}\right]_{0\le j,k<N};
    \end{equation}
    then we have a diagonal decomposition of the shift matrix: 
    $\matr{F}^{*} \operatorname{diag}(\omega_N^0,\dots,\omega_N^{-(N-1)}) \matr{F}$. Since $\matr{A} = \sum_{0 \leqslant j \leqslant N-1} c_j \operatorname{circ}(0,1,0, \dots, 0)^j$, $\matr{A}$ shares the same eigenspaces with $\operatorname{circ}(0,1,0, \dots, 0)$ and if $x$ is a eigenvalue of the shift matrix, $\sum_{0 \leqslant j \leqslant N-1} c_j x^j
    $ is an eigenvalue of $\matr{A}$.
    Then we have a diagonal decomposition of the adjacency matrix:
    $\matr{A} = \matr{F}^{*} \operatorname{diag}(\mu_0,\dots,\mu_{N-1}) \matr{F}$, with (non-ordered) eigenvalues
    \begin{equation}
        \mu_k =\sum_{j=0}^{N-1}c_j \omega_N^{-jk}
        =2\sum_{s\in S}\cos \left( \frac{2\pi sk}{N} \right) , k \in \{0,\dots,N-1 \}.
    \end{equation} 
    Let $d$ be the degree of every node. Then from $\Lapl=d \Id_N-\matr{A}.$ we get that the spectrum of the Laplacian of $C_N(S)$ is $\{ \lambda_k = d - 2\sum_{s\in S}\cos \left( \frac{2\pi s (k-1)}{N} \right)\}_{1 \leqslant k \leqslant N}$.
\end{proof}

\begin{proposition}
    The transition Matrix of heat diffusion on a circulant graph has entries
    \begin{equation}
        T_{i,j}(t) = \frac{1}{N}\sum_{k=0}^{N-1} \exp  \left( -t \lambda_{k+1} \right) \omega_N^{\,k ((j-i)\bmod n)}.
    \end{equation}
\end{proposition}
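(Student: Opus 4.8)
The plan is to diagonalize the Laplacian in the Fourier basis and then apply the matrix exponential term by term; the required formula then follows by reading off a single entry of the resulting triple product. The previous proposition already supplies the eigendecomposition of the adjacency matrix, $\matr{A} = \matr{F}^{*}\operatorname{diag}(\mu_0,\dots,\mu_{N-1})\matr{F}$, with $\matr{F}$ the unitary discrete Fourier matrix of \cref{eq:DFTmatrix} and $\mu_k = 2\sum_{s\in S}\cos(2\pi s k/N)$. The first step is to transport this decomposition to the Laplacian. Since $\Lapl = d\Id_N - \matr{A}$ and the identity is diagonalized by every unitary matrix (in particular $\Id_N = \matr{F}^{*}\Id_N\matr{F}$), I would write $\Lapl = \matr{F}^{*}\operatorname{diag}(d-\mu_0,\dots,d-\mu_{N-1})\matr{F}$, whose $k$-th diagonal entry is precisely $d-\mu_k = \lambda_{k+1}$, the eigenvalue found above.

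Next I would pass to the transition matrix $\T(t) = e^{-t\Lapl}$. Writing $\Lapl = \matr{F}^{*}\matr{\Lambda}\matr{F}$ with $\matr{\Lambda}=\operatorname{diag}(\lambda_1,\dots,\lambda_N)$ and substituting into the power series $\sum_{m\geqslant 0}(-t)^m\Lapl^m/m!$, the interior factors telescope via $\matr{F}\matr{F}^{*}=\Id_N$, so that $\Lapl^m = \matr{F}^{*}\matr{\Lambda}^m\matr{F}$ and each diagonal entry resums into an exponential. This yields $\T(t)=\matr{F}^{*}\operatorname{diag}(e^{-t\lambda_1},\dots,e^{-t\lambda_N})\matr{F}$; in other words, conjugation by the unitary $\matr{F}$ commutes with the matrix exponential, exactly as it does for the adjacency matrix.

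The final step is the entrywise computation of this triple product. With the $0$-based convention of \cref{eq:DFTmatrix} one has $F_{kj}=\tfrac{1}{\sqrt N}\omega_N^{\,kj}$ and, using $\overline{\omega_N}=\omega_N^{-1}$, $(\matr{F}^{*})_{ik}=\tfrac{1}{\sqrt N}\omega_N^{-ki}$, so that
\[
T_{i,j}(t)=\sum_{k=0}^{N-1}(\matr{F}^{*})_{ik}\,e^{-t\lambda_{k+1}}\,F_{kj}=\frac{1}{N}\sum_{k=0}^{N-1}e^{-t\lambda_{k+1}}\,\omega_N^{\,k(j-i)},
\]
and since $\omega_N^{N}=1$ the exponent $k(j-i)$ may be reduced modulo $N$, giving the stated formula. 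I do not anticipate a genuine obstacle here, as the statement is essentially a corollary of the eigendecomposition already in hand; the only points demanding care are bookkeeping, namely matching the $0$-based indexing of $\matr{F}$ against the $1$-based indexing of the eigenvalues $\lambda_{k+1}$, and correctly passing the complex conjugate into $\matr{F}^{*}$.
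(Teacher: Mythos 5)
Your proposal is correct and follows essentially the same route as the paper: both diagonalize the circulant Laplacian via the DFT matrix inherited from the adjacency decomposition, pass the matrix exponential through the conjugation, and read off entries (the paper computes only the first row and then invokes circulancy, whereas you compute the general $(i,j)$ entry of the triple product directly, which is an immaterial bookkeeping difference).
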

\begin{proof}
    \begin{equation*}
        e^{-t\Lapl}
         =e^{-t(d \Id_N-\matr{A})}
         =e^{-td \Id_N}\,e^{t \matr{A}}
         =e^{-dt} e^{t\matr{A}},
    \end{equation*}
    or in its spectral form $e^{-t \Lapl} = \matr{F}^{*}
            \operatorname{diag}  \left( e^{-t(d-\mu_0)},\dots, e^{-t(d-\mu_{N-1})} \right) 
            \matr{F}.$
    From this, we get the entries of the first row of the transition matrix
    \begin{equation*}
        \forall r \in \{0,\dots,n-1\}: h_t(r) \coloneqq \left( e^{-t\Lapl} \right) _{0,r} =
               \frac{1}{N}\sum_{k=0}^{N-1} \exp \left( -t \lambda_{k+1} \right) \omega_N^{\,k r}.
    \end{equation*}
    \\
    Since $e^{-t\Lapl}$ is circulant, every entry depends only on the distance
    $r=j-i\bmod n$:
    \begin{equation*}
        \left( e^{-t\Lapl} \right)_{i,j}=h_t(j-i\bmod n).
    \end{equation*}
\end{proof}
In the special case of the undirected cycle, where $S=\{1\}$ and nodes have degree $d=2$, one gets
\begin{equation}
    \mu_k = 2\cos \left( \frac{2\pi k}{N} \right) , \text{ and }
    h_t(r)=e^{-2t} \frac{1}{N}\sum_{k=0}^{N-1} \exp \left( 2t\cos \left( \frac{2\pi k}{N} \right) \right) \omega_N^{k r}.
\end{equation}
In \cref{fig:circulant_curves} we show how edge density and diameter of a circulant graph influence the evolution of heat conditional entropy on top of it. The density of a circulant graph is equal to $\frac{d}{N-1} \approx \frac{2 |S|}{N-1}$. The graph is connected if and only if the step-set $S$ generates $\mathbb{Z}_N$; hence, finding its diameter is equivalent to the Cayley graph diameter problem over $\mathbb{Z}_N$. \\

\begin{figure}[htbp]
  \centering
  \includegraphics[scale=0.40]{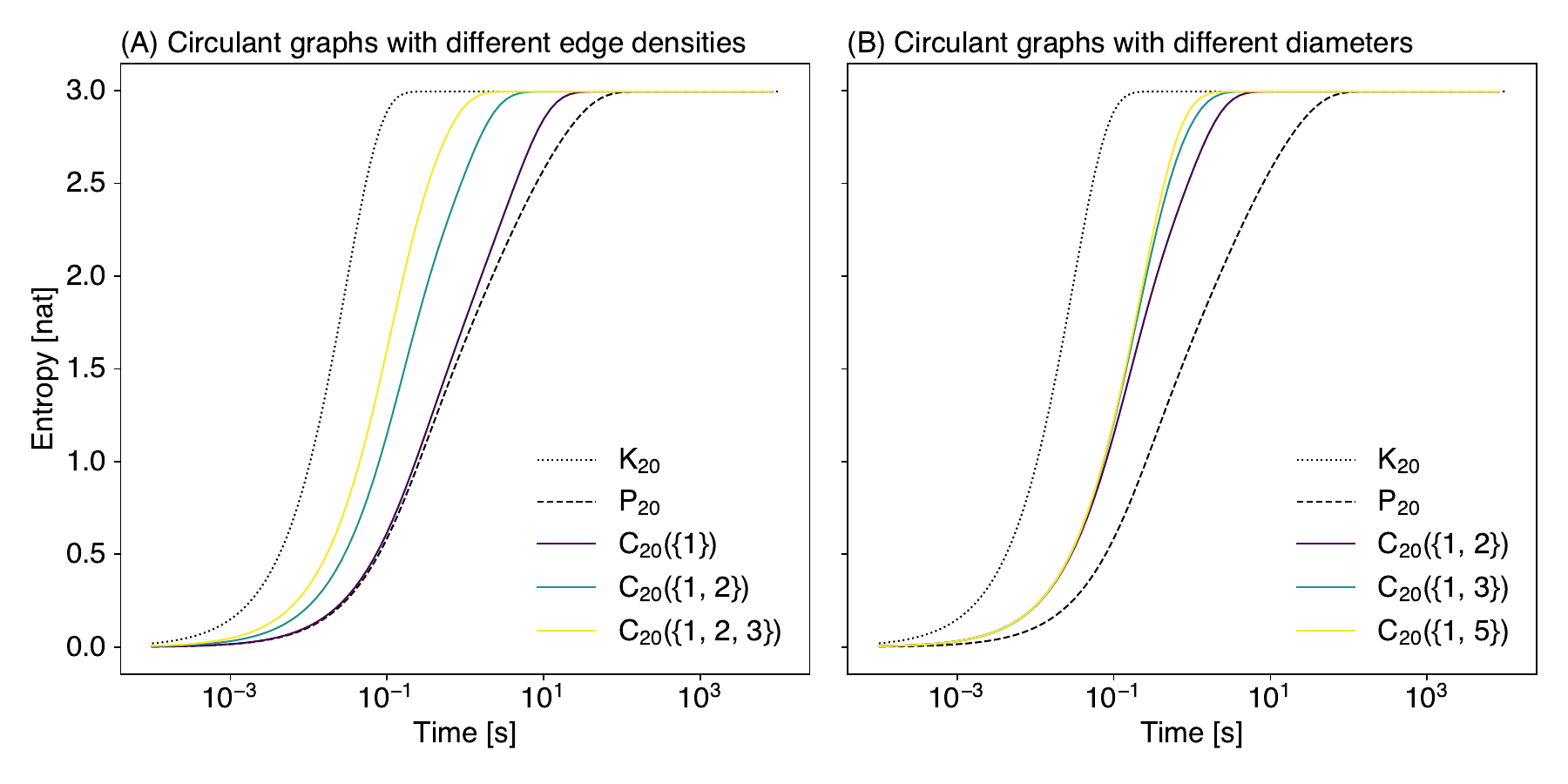}
  \caption{Conditional entropy for heat diffusion on a complete graph, path graph, and circulant graphs with different step-sets. The entropy curves are bounded from below by the path graph curve and above by the complete graph curve. (A) Three circulant graphs with increasing density. The density of $C_{20}(\{1\})$ is equal to $\frac{2}{19}$, the one of $C_{20}(\{1, 2\})$ is $\frac{4}{19}$, and the one of $C_{20}(\{1, 2, 3\})$ is $\frac{6}{19}$. The entropy grows faster in denser graphs. (B) Three circulant graphs with the same density but decreasing diameter. The diameter of $C_{20}(\{1\})$ is equal to $10$, the one of $C_{20}(\{1, 2\})$ is $5$, and the one of $C_{20}(\{1, 2, 3\})$ is $4$.The entropy grows faster as the diameter decreases.}
  \label{fig:circulant_curves}
\end{figure}

\subsection{Asymptotic Behaviour} \label{subsec:asymptotic_behaviour}
For a generic (random) graph, there is not necessarily such an explicit formula for its conditional entropy curve; however, we can say more about its asymptotic behaviour. We first derive the asymptotic value, which on connected graphs depends only on the size of the graph, and then describe how much time is required for the system to approach it.

We start by noting that the rows of the limit transition probability matrix  $\lim_{t \to \infty} \T(0,t)$ of an ergodic Markov chain are given by its stationary distribution $\Tilde{\matr{\pi}}$.
Given that the conditional entropy averages the row entropies with respect to the initial condition $p(0)$, this implies that 
its asymptotic is equal to the entropy of $\Tilde{\matr{\pi}}$, hence is independent of the initial distribution, i.e.
    \begin{equation}
        \forall p(0), q(0) :  H(p(t) | p(0)) = H(q(t) | q(0)).
    \end{equation}
For heat diffusion in a connected graph, this value  $\lim_{t \to \infty} H(Z (t) | Z(0))$ is equal to $\log(N)$ independently of the network's structure.
In the \cref{appendix:RW} we show that using the random walk diffusion dynamic leads to a different asymptotic value, determined by the degree distribution (\cref{prop_asymptot_RW}).\\
The case of multiple connected components can then be understood by applying the results for connected components to each component separately.
\begin{proposition}
    Consider a graph with vertex set $V=\bigcup_{k=1}^l V_k$ partitioned into disjoint connected components with vertex subsets $V_1, \dots, V_l$. Then we have:
    \begin{equation} \label{eq:heatlimit_disjoint}
        \lim_{t \to \infty} H(Z (t) | Z(0)) = \sum_{k=1}^l \log(|V_k|)\sum_{j \in V_k}p_j(0),
    \end{equation}
and if the distribution of the initial distribution $p(0)$ is uniform,
    \begin{equation}
        \lim_{t \to \infty} H(Z (t) | Z(0)) = \sum_{i=1}^l \frac{|V_i|}{N} \log(|V_i|).
    \end{equation}
\end{proposition}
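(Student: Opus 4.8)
The plan is to reduce to the connected case by exploiting the block structure that disconnectedness imposes on the Laplacian. First I would relabel the nodes so that those belonging to the same component are contiguous; then the combinatorial Laplacian becomes block-diagonal, $\Lapl = \operatorname{diag}(\Lapl_1, \dots, \Lapl_l)$, where each $\Lapl_k$ is the Laplacian of the connected subgraph induced by $V_k$. Since the exponential of a block-diagonal matrix is block-diagonal, $\T(0,t) = e^{-t\Lapl}$ shares the same structure. The immediate consequence is that $T_{i,j}(0,t) = 0$ whenever $i$ and $j$ lie in different components, for every $t$: heat never crosses between components, and on each block the dynamics is exactly heat diffusion on the connected subgraph induced by $V_k$.

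Next I would apply the connected-graph analysis of \cref{prop_asymptot} to each block in turn. For a node $i \in V_k$ this yields $\lim_{t\to\infty} T_{i,j}(0,t) = \frac{1}{|V_k|}$ for $j \in V_k$ and $0$ otherwise, so the limiting row entropy is
\[
    \lim_{t\to\infty} H_i(t\mid 0) = -\sum_{j \in V_k} \frac{1}{|V_k|}\log\!\left(\frac{1}{|V_k|}\right) = \log(|V_k|),
\]
where the off-block entries contribute nothing under the convention $0\log 0 = 0$. Because the node set is finite, the limit passes through the finite sum defining $H(Z(t)\mid Z(0)) = \sum_i p_i(0)\, H_i(t\mid 0)$; grouping the outer sum according to which component each $i$ belongs to then gives
\[
    \lim_{t\to\infty} H(Z(t)\mid Z(0)) = \sum_{k=1}^l \sum_{j \in V_k} p_j(0)\,\log(|V_k|) = \sum_{k=1}^l \log(|V_k|)\sum_{j \in V_k} p_j(0),
\]
which is the first identity. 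For the uniform initial condition I would substitute $p_j(0) = \frac{1}{N}$, whence $\sum_{j\in V_k} p_j(0) = \frac{|V_k|}{N}$ and the expression collapses to $\sum_{k=1}^l \frac{|V_k|}{N}\log(|V_k|)$.

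I do not expect a genuine obstacle here, as the result follows almost entirely from the block-diagonal reduction together with already-established connected-graph limits. The only points deserving a line of justification are the vanishing of cross-component transition probabilities (and the harmless use of $0\log 0 = 0$ for those entries) and the interchange of the limit with the sum over nodes, which is trivial in finite dimension.
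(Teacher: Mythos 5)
Your proposal is correct and follows essentially the same route as the paper: the paper likewise observes that heat cannot cross components, writes the limiting transition probabilities as $\delta_{V(i)=V(j)}/|V(i)|$ so that each row entropy tends to $\log(|V(i)|)$, and then groups the weighted sum $\sum_i p_i(0)\log(|V(i)|)$ by component. Your extra remarks on the block-diagonal Laplacian, the $0\log 0=0$ convention, and the limit--sum interchange are just a more explicit writing-up of the same argument.
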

\begin{proof}
    Heat can only diffuse between nodes in the same connected component. Let us denote by $V(i)$ the connected component to which node $i$ belongs. Then we have:
    \begin{align*} 
        \lim_{t \to \infty} H(Z (t) | Z(0)) &= - \sum_{i=1}^N p_i(0) \sum_{j=1}^N  \frac{\delta_{V(i) = V(j)}}{\sum_{k=1}^N\delta_{V(i) = V(j)}} \log \left(\frac{\delta_{V(i) = V(j)}}{\sum_{k=1}^N\delta_{V(i) = V(j)}} \right) \\
        &= - \sum_{i=1}^N p_i(0) \sum_{j \in V(i)}  \frac{1}{|V(i)|} \log \left(\frac{1}{|V(i)|} \right) =
        \sum_{i=1}^N p_i(0) \log(|V(i)|) \\
        &= \sum_{k=1}^l \log(|V_k|)\sum_{j \in V_k}p_j(0).
    \end{align*}
    If we set $Z(0)$ to be uniformly distributed, we then get:
    \begin{align*}
         \lim_{t \to \infty} H(Z (t) | Z(0)) &= \sum_{k=1}^l \log(|V_k|)\sum_{j \in V_k} \frac{1}{N} = \sum_{i=1}^l \frac{|V_i|}{N} \log(|V_i|).
    \end{align*}
\end{proof}

Regarding how much time it takes to get to stationarity, we remind the readers of a known fact: for ergodic finite-state Markov chains coordinate-wise convergence of the probability vectors implies convergence of the $p$-norms in finite dimensions, so  if $G$ is a connected graph, then
    \begin{equation}
        \forall p \in [1, \infty]:\lim_{t \to \infty} \Vert \matr{\pi}^{\intercal}- \p(t)^{\intercal}\Vert_p  = 0.
    \end{equation}
There are multiple ways to describe how much time it takes to get close to the asymptotic level \cite{durrett_dynamics_2025, levin_markov_2017}. For instance, since the transition matrices of our system are diagonalizable, we can estimate the deviation from stationarity as 
\begin{align*}
\matr{\pi}^{\intercal} - \p(t)^{\intercal} &=  \matr{\pi}^{\intercal} - \left( \matr{\pi}^{\intercal} + \p(0)^{\intercal} \sum_{i=2}^{N} e^{-\lambda_i t} \matr{v}^{(i)} \matr{v}^{(i)^\intercal} \right)  \\
&= \p(0)^{\intercal} \sum_{i=2}^{N} e^{-\lambda_i t} \matr{v}^{(i)} \matr{v}^{(i)^\intercal}  
=  \p(0)^{\intercal} e^{-\lambda_2 t} \matr{v}^{(2)} \matr{v}^{(2)\intercal} + \p(0)^{\intercal} \sum_{i=3}^{N} e^{-\lambda_i t} \matr{v}^{(i)} \matr{v}^{(i)^\intercal}.
\end{align*}
Taking the limit, we get:
\begin{equation} \label{eq:mixing_time}
    \lim_{t \to \infty} \matr{\pi}^{\intercal} - \p(t)^{\intercal} \approx  e^{-\lambda_2 t} \p(0)^{\intercal} \matr{v}^{(2)} \matr{v}^{(2)^\intercal}.
\end{equation} 

This shows that the rates of decay are given by the Laplacian eigenvalues, which depend on the graph density, and that the spectral gap, $\lambda_2$, gives the slowest rate and determines the mixing time.

\subsection{Upper and Lower Bounds for the Evolution of the Conditional Entropy}

Van Mieghem \cite[art. 163]{van_mieghem_graph_2023} explains that Laplacian eigenvalues are non-decreasing if edges are added to a graph, implying that denser graphs have shorter mixing time. We give a simpler proof of this result and use it to argue that the eigenvalues of the complete graph's Laplacian are maximal in the following sense:
\begin{proposition} \label{prop:edge_mono}
    Let $G=(V,E)$ be a subgraph with $N$ nodes of $G'=(V, E \bigcup (u,v) )$ obtained by removing an edge from it. Consider the ordered Laplacian spectra $\{ \lambda_i^G \}_{1 \leqslant i \leqslant N}$, $\{ \lambda_i^{G'} \}_{1 \leqslant i \leqslant N}$ of $G$ and $G'$ and the Laplacian spectrum $\{ \lambda_i^K \}_{1 \leqslant i \leqslant N}$ of the complete graph $K_N$ of the same size. Then we have that
    \begin{equation}
        \forall i : \lambda_i^G \leqslant \lambda_i^{G'}  \leqslant \lambda_i^{K_N}.
    \end{equation} 
\end{proposition}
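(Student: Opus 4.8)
The plan is to observe that adding an edge perturbs the Laplacian by a positive semidefinite matrix, and then to invoke the Courant--Fischer min--max characterization of eigenvalues to deduce the monotonicity in each index. First I would record the effect of inserting the edge $(u,v)$. In the standard situation, where $u$ and $v$ already have positive degree in $G$, one has
\begin{equation*}
    \Lapl^{G'} = \Lapl^{G} + (\matr{e}_u - \matr{e}_v)(\matr{e}_u - \matr{e}_v)^{\intercal},
\end{equation*}
where $\matr{e}_u$ denotes the $u$-th standard basis vector; the added term is a rank-one positive semidefinite matrix. (If an endpoint is isolated in $G$, the paper's nonstandard diagonal convention alters the bookkeeping slightly, but inspecting the $2\times 2$ block of the difference supported on $\{u,v\}$ shows $\Lapl^{G'}-\Lapl^{G}$ remains positive semidefinite.) I then set $\matr{P} \coloneqq \Lapl^{G'} - \Lapl^{G} \succeq 0$.

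Next I would apply Courant--Fischer with eigenvalues in ascending order: for any symmetric $N\times N$ matrix $\matr{A}$,
\begin{equation*}
    \lambda_i(\matr{A}) = \min_{\dim \mathcal{S} = i}\; \max_{\matr{0} \neq \matr{x} \in \mathcal{S}} \frac{\matr{x}^{\intercal} \matr{A}\, \matr{x}}{\matr{x}^{\intercal}\matr{x}}.
\end{equation*}
Since $\matr{x}^{\intercal}\matr{P}\matr{x} \geqslant 0$ for every $\matr{x}$, the Rayleigh quotient of $\Lapl^{G'}$ dominates that of $\Lapl^{G}$ pointwise. Taking the maximum over each subspace $\mathcal{S}$ and then the minimum over all $i$-dimensional subspaces preserves this domination, so $\lambda_i^{G} \leqslant \lambda_i^{G'}$ for every $i$. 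This is the edge-monotonicity statement, proved in one line once the PSD perturbation is identified.

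Finally, for the upper bound I would iterate. The complete graph $K_N$ contains $G'$ as a spanning subgraph, hence $K_N$ is obtained from $G'$ by inserting the finitely many missing edges one at a time. Applying the edge-monotonicity inequality at each insertion yields $\lambda_i^{G'} \leqslant \lambda_i^{K_N}$ for all $i$, completing the chain $\lambda_i^{G} \leqslant \lambda_i^{G'} \leqslant \lambda_i^{K_N}$. As a consistency check, \cref{lemma:spectrumKN} gives $\lambda_i^{K_N} = N$ for $i \geqslant 2$, which is the largest value a Laplacian eigenvalue of a simple graph on $N$ vertices can attain.

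The argument is essentially routine. The only points requiring care are the eigenvalue-ordering convention in Courant--Fischer---ascending order is precisely what makes the pointwise Rayleigh-quotient domination propagate to every index---and checking that the single-edge perturbation is genuinely positive semidefinite under the paper's isolated-node diagonal convention. The main conceptual step, such as it is, is recognizing the rank-one PSD structure of a single-edge Laplacian, after which both inequalities follow from standard perturbation theory rather than from any explicit computation of spectra.
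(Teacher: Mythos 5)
Your proposal is correct and follows essentially the same route as the paper: both identify the single-edge perturbation $\Lapl^{G'}-\Lapl^{G}=(\matr{e}_u-\matr{e}_v)(\matr{e}_u-\matr{e}_v)^{\intercal}$ as positive semidefinite and deduce index-wise eigenvalue monotonicity, then iterate up to $K_N$. The only cosmetic difference is that you invoke Courant--Fischer directly where the paper cites Weyl's inequality (which is itself a consequence of Courant--Fischer), and you additionally flag the isolated-node diagonal convention, which the paper's proof glosses over.
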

\begin{proof}
    We first address the inequality on the left. If we denote by $e_u$ and $e_v$ the basis vector corresponding to nodes $u$ and $v$, then the Laplacian $\Lapl'$ of $G'$ can be expressed as the Laplacian $\Lapl$ of $G$ plus a perturbation $\Lapl_e \coloneqq(e_u - e_v) (e_u - e_v)^{\intercal}$. Notice that $\Lapl_e$ has rank equal to one and the only non-zero eigenvalue is equal to two.
    Weil's inequality \cite{weyl_asymptotische_1912, helmke_eigenvalue_1995} states that
    \begin{equation}
        1 \leqslant j \leqslant i \leqslant j \leqslant N: \lambda_{i-j+1} (\Lapl) + \lambda_j (\Lapl_e) \leqslant \lambda_{i}(\Lapl + \Lapl_e)  \leqslant \lambda_{N-k+i} (\Lapl) + \lambda_k (\Lapl_e).
    \end{equation}
    By choosing $j=1$, the left-side inequality leads to $\lambda_i(\Lapl) \leqslant \lambda_i(\Lapl')$. Furthermore, every graph is a subgraph of the complete graph; thus, by monotonicity, the second desired inequality is also proven.
\end{proof} 
It follows immediately that:
\begin{corollary}
    Let $G$ be a graph of size $N$. Then:
    \begin{equation}
        2 \leqslant i \leqslant N: \lambda_i(G) \leqslant N = \lambda_2(K_N).
    \end{equation}
\end{corollary}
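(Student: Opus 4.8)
The plan is to read the corollary directly off the right-hand inequality of \cref{prop:edge_mono}, substituting the explicit Laplacian spectrum of $K_N$ computed in \cref{lemma:spectrumKN}. Nothing new needs to be proved; the statement is a specialization of what is already available.

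First I would observe that any graph $G$ on $N$ vertices is obtained from the complete graph $K_N$ by deleting some (possibly empty) sequence of edges, so $G$ is a subgraph of $K_N$. Applying the monotonicity $\lambda_i^{G}\leqslant\lambda_i^{G'}$ from \cref{prop:edge_mono} along such a chain of single-edge additions — equivalently, invoking the already-packaged bound $\lambda_i^{G'}\leqslant\lambda_i^{K_N}$ stated there — yields, position by position in the common ascending ordering, $\lambda_i(G)\leqslant\lambda_i(K_N)$ for every $1\leqslant i\leqslant N$. Then I would substitute the eigenvalues of $K_N$: by \cref{lemma:spectrumKN} one has $\lambda_1(K_N)=0$ and $\lambda_2(K_N)=\dots=\lambda_N(K_N)=N$. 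Hence for each index $2\leqslant i\leqslant N$ we get $\lambda_i(G)\leqslant\lambda_i(K_N)=N=\lambda_2(K_N)$, which is precisely the claim.

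There is no genuine obstacle here, since the whole content is carried by \cref{prop:edge_mono}. The only points worth checking are bookkeeping: that the comparison respects the shared ascending ordering of the two spectra, and that the top $N-1$ eigenvalues of $K_N$ all collapse to the single value $N$, so that the upper bound is the constant $N$ independently of $i$ (and in particular coincides with $\lambda_2(K_N)$ for every $i\geqslant 2$).
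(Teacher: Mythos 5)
Your argument is correct and is essentially the paper's own proof: both apply the right-hand inequality of \cref{prop:edge_mono} to get $\lambda_i(G)\leqslant\lambda_i(K_N)$ and then substitute $\lambda_2(K_N)=\dots=\lambda_N(K_N)=N$ from \cref{lemma:spectrumKN}. No gaps.
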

\begin{proof}
The previous proposition says that $ 2 \leqslant i \leqslant N: \lambda_i(G) \leqslant \lambda_i(K_N)$  and in \cref{lemma:spectrumKN} we have seen that $2 \leqslant i \leqslant N: \lambda_i(K_N) = \lambda_2(K_N) = N$.   
\end{proof}
This establishes the complete graph as the structure upon which diffusion unfolds the fastest. This is intuitively clear, as more edges correspond to more pathways for diffusion. The same argument can not be used to establish an optimal slowest structure: trees have minimal eigenvalues, but no tree has uniformly smaller ones, as one can see by confronting the spectra of the path and star graphs. \\
However, we can argue that at least asymptotically, the path graph will be the one that reaches the asymptotic level the slowest.

\begin{proposition}
    Consider a connected graph $G$ with $N$ nodes. Then its algebraic connectivity is bigger than or equal to that of the path graph $\lambda_2(P_N) = 2 \left( 1-\cos \left( \frac{\pi}{N} \right) \right) = 4\sin^2 \left( \frac{\pi}{2N} \right)$.
\end{proposition}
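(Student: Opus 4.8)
\section*{Proof proposal}

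The plan is to reduce an arbitrary connected graph to one of its spanning trees using the edge-monotonicity already established, and then to invoke the classical fact that the path minimizes algebraic connectivity among trees; the exact value of $\lambda_2(P_N)$ is then read off from the path spectrum computed above. First I would reduce to trees. Every connected graph $G$ on $N$ nodes contains a spanning tree $T$, obtained from $G$ by successively deleting edges. Applying \cref{prop:edge_mono} one edge at a time, the ordered Laplacian eigenvalues are non-decreasing under edge addition, so in particular $\lambda_2(T) \leqslant \lambda_2(G)$. Hence it suffices to prove that $\lambda_2(T) \geqslant \lambda_2(P_N)$ for every $N$-vertex tree $T$. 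The target value itself comes from \cref{eq:spectrum_path} with $k=2$, namely $\lambda_2(P_N) = 2\bigl(1-\cos(\pi/N)\bigr) = 4\sin^2\bigl(\pi/(2N)\bigr)$, using $1-\cos\theta = 2\sin^2(\theta/2)$.

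For the tree case I would argue that $P_N$ minimizes $\lambda_2$ among all $N$-vertex trees (Fiedler). The approach is a grafting argument: if a tree $T$ is not a path, it has a vertex of degree at least three, i.e.\ a branch leaving a longest (diametral) path. One relocates such a branch so as to elongate the tree toward a path — for instance, reattaching it as a pendant path at a suitable endpoint — and shows that this operation does not increase $\lambda_2$. Iterating the operation converts any tree into $P_N$ through a sequence of non-increasing $\lambda_2$ values, yielding $\lambda_2(T) \geqslant \lambda_2(P_N)$, and combined with the spanning-tree reduction this proves the proposition.

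The hard part is showing that each grafting step does not increase $\lambda_2$. The clean tool is Fiedler's structure theorem for the Fiedler vector of a tree: there is a characteristic vertex (or edge) from which the eigenvector entries are monotone in modulus along every outgoing path. This lets one build, from the Fiedler vector of the elongated tree, a test vector orthogonal to $\matr{1}_N$ whose Rayleigh quotient $\matr{x}^{\intercal}\Lapl\matr{x}/\matr{x}^{\intercal}\matr{x}$ is bounded by $\lambda_2$ of the original tree, because the relocated branch sits in a region of smaller eigenvector weight. Controlling the bookkeeping of this comparison, together with the boundary case of a characteristic edge, is the only genuinely delicate point; alternatively the entire tree step can be cited directly from Fiedler's work.
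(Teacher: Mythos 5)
Your proposal is correct and follows essentially the same route as the paper: reduce to a spanning tree via the edge-monotonicity of Laplacian eigenvalues (\cref{prop:edge_mono}), then invoke Fiedler's theorem that the path minimizes algebraic connectivity among $N$-vertex trees, reading off $\lambda_2(P_N)$ from \cref{eq:spectrum_path}. The paper simply cites Fiedler for the tree step rather than sketching the grafting argument, which you also note as an acceptable alternative.
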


\begin{proof}
    Recall from \cref{eq:spectrum_path} that we know that the algebraic connectivity of $P_N$ is
    \begin{equation}
        \lambda_2 = 2 \left( 1-\cos \left( \frac{\pi}{N} \right) \right) 
              = 4\sin^2 \left( \frac{\pi}{2N} \right).
    \end{equation}  
    Consider a spanning tree $T$ of $G$. If $T$ is isomorphic to $P_N$, then by the edge-monotonicity principle stated in \cref{prop:edge_mono}, the algebraic connectivity of $G$ must be bigger than or equal to that of $P_N$. Otherwise, \cite[4.3]{fiedler_algebraic_1973} states that among trees with $N$ nodes, the $P_N$ has minimal algebraic connectivity. Hence we conclude that $\lambda_2(P_N) \leqslant \lambda_2(T) \leqslant \lambda_2(G)$.
\end{proof}
We can use these results on the mixing time to argue that the complete and path graph are good representatives of fast and slow diffusion structures to use as references for other graphs when plotting their conditional entropy curves.
\\
Finally, we end this section with a final result linking the conditional entropy to the total variation of the rows of the transition matrix from the stationary distribution. 
\begin{proposition}
    For every $p \geqslant 1$:
    $\log(N) - H(Z(t) | Z(0)) \geqslant \sum_i\frac{p_i(0)}{2} \|\p^{(i)}- \matr{\pi} \|_p^2.$
\end{proposition}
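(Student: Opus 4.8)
The plan is to reduce the statement to the per-row identity relating conditional entropy to KL divergence that already appeared in the proof of \cref{Thm:2ndlaw}, and then apply the Pinsker inequality together with the monotonicity of $\ell^p$ norms. Since every ingredient has already been established, the argument is essentially an assembly of known facts rather than a new computation.

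First I would recall the identity derived inside the proof of \cref{Thm:2ndlaw}: for every row $i$ and every $t > 0$,
\[
\log(N) - H_i(t|0) = \dkl\!\left(\T_{i,:}(0,t) \,\|\, \matr{\pi}\right) = \dkl\!\left(\p^{(i)} \,\|\, \matr{\pi}\right),
\]
where $\p^{(i)} = \T_{i,:}(0,t)$ denotes the $i$-th row of the transition matrix and $\matr{\pi}$ is the uniform stationary distribution. This uses only that $\pi_j = 1/N$ and that each row of the transition matrix sums to one.

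Next I would apply the Pinsker inequality (\cref{prop:Pinsker}) with the row $\p^{(i)}$ in place of $p$ and $\matr{\pi}$ in place of $u$, obtaining $\dkl(\p^{(i)} \| \matr{\pi}) \geqslant \tfrac{1}{2}\|\p^{(i)} - \matr{\pi}\|_1^2$. To pass from the $1$-norm to a general $\ell^p$ norm with $p \geqslant 1$, I would invoke the standard norm ordering $\|x\|_p \leqslant \|x\|_1$, valid for all $p \geqslant 1$ and all vectors $x$; squaring gives $\|\p^{(i)} - \matr{\pi}\|_1^2 \geqslant \|\p^{(i)} - \matr{\pi}\|_p^2$. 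Chaining these with the identity above yields the per-row bound $\log(N) - H_i(t|0) \geqslant \tfrac{1}{2}\|\p^{(i)} - \matr{\pi}\|_p^2$.

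Finally I would take the convex combination over $i$ with weights $p_i(0)$. Since $\sum_i p_i(0) = 1$ and $H(Z(t) | Z(0)) = \sum_i p_i(0) H_i(t|0)$, multiplying the per-row inequality by $p_i(0)$ and summing turns the left-hand side into $\log(N) - H(Z(t) | Z(0))$ and the right-hand side into $\sum_i \tfrac{p_i(0)}{2}\|\p^{(i)} - \matr{\pi}\|_p^2$, which is exactly the claim. The only point requiring a moment of care — and the nearest thing to an obstacle — is choosing the correct direction of the norm comparison: because $p \geqslant 1$, it is the $1$-norm (the quantity Pinsker actually controls) that dominates the $p$-norm, so that weakening the bound preserves rather than reverses the inequality.
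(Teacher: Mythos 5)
Your proposal is correct and follows exactly the paper's own argument: the per-row identity $\log(N) - H_i(t|0) = \dkl(\p^{(i)}\|\matr{\pi})$ from the proof of \cref{Thm:2ndlaw}, Pinsker's inequality, the norm comparison $\|x\|_1 \geqslant \|x\|_p$ for $p \geqslant 1$, and averaging over $i$ with weights $p_i(0)$. Nothing further is needed.
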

\begin{proof}
    We have seen in the proof of \cref{Thm:2ndlaw} that $
    \log(N) - H(\p^{(i)})\;=\; \dkl(\p^{(i)}\|\matr{\pi})$.
    As a direct consequence of Pinsker inequality (\cref{prop:Pinsker}) we have for every row distribution $\p^{(i)}$,
    \begin{equation*}
        \log(N) - H(\p^{(i)})\;=\; \dkl(\p^{(i)}\| \matr{\pi}) \geqslant \frac{1}{2} \| \p^{(i)}- \matr{\pi} \|_1^2.
    \end{equation*}
    Since for any vector $\matr{x}$, $\| \matr{x} \|_1 \geqslant \| \matr{x} \|_p$ for every $p \geqslant 1$, this yields
    \begin{equation*}
        \log(N) - H( \p^{(i)}) \geqslant \frac{1}{2} \| \p^{(i)}- \matr{\pi} \|_p^2,
    \end{equation*} which tells us that $\log(N) - H(Z(t) | Z(0)) \geqslant \sum_i\frac{p_i(0)}{2} \| \p^{(i)}- \matr{\pi} \|_p^2$.
\end{proof}

\subsection{Simulations on Random Networks} \label{sec:Simulations}
 In \cref{subsec:Thermodynamics} we showed local properties of the conditional entropy of diffusion dynamics (conservation and monotonicity). For certain classes of graphs, high regularity enabled us to compute the conditional entropy evolution curves explicitly (\cref{subsec:extact_results}). In  \cref{subsec:asymptotic_behaviour} we derived general results describing the asymptotic behaviour of conditional entropy, and we showed that the conditional entropy curves of the full and path graphs of a certain size bound asymptotically the region where we would expect to observe the conditional entropy curve of any graph of the same size. Now, we experimentally capture the properties of the evolution of conditional entropy in diffusion on two classes of random graphs that are prominent in the literature on complex networks: Watts-Strogatz and Erdős-Rényi graphs \footnote{The code to reproduce the simulation is available at \url{https://github.com/samuelkoovely/Evolution-of-Conditional-Entropy-for-Diffusion-Dynamics-on-Graphs}.}.
\\ \\
Watts-Strogatz networks \cite{watts_collective_1998, watts_small_1999} are obtained by randomly rewiring a small number of edges in circulant graphs where each node is connected precisely to the $k$ previous and $k$ subsequent ``neighbours". The randomly rewired edges create shortcuts in the networks, giving them the so-called small-world property, i.e., their diameters are proportional to the logarithm of the number of nodes. More formally, a Watts-Strogatz network is a sample of the Watts-Strogatz model $WS_N(\{1, \dots, k\}, p)$ obtained by taking the circulant graph $C_N(\{1, \dots, k\})$ and rewiring each edge with probability $p$ uniformly at random, while assuring that no self-loop or multi-edge is generated. 
\\
In \cref{fig:WS_ER_vs_circulant} (A), we simulate heat diffusion on 10 Watts-Strogatz graphs with 100 nodes with a uniform initial condition and compute their conditional entropy. We then illustrate the comparison between the entropy of simulated networks and their non-perturbed circulant graph counterpart. Finally, we add the complete and path graphs of the same size as references.
Initially, the entropy curves on Watts-Strogatz networks grow similarly to those of the reference circulant graph. Later, the circulant graph's curve falls behind. This fact is explained by the presence of shortcuts in Watts-Strogatz networks that accelerate diffusion. Indeed, the entropy grows even faster as we rewire more edges, creating more shortcuts. \\ \\
Erd\"os-R\'enyi graphs (ER graphs) are generated by a random model that, given a predetermined number of nodes $N$, include any possible edge independently with a fixed probability $p$.
In \cref{fig:WS_ER_vs_circulant} (B), we sample 10 ER graphs and compare them with circulant graphs with the same density but varying diameters.
Initially, these curves grow similarly, then graphs with higher diameters fall behind. The ER curve reaches the asymptotic level faster than the circulant counterpart with the same diameter, because of the higher number of nodes at distance two.

\begin{figure}[htbp]
  \centering
  \label{fig:WS_ER_vs_circulant}\includegraphics[width=0.95\linewidth]{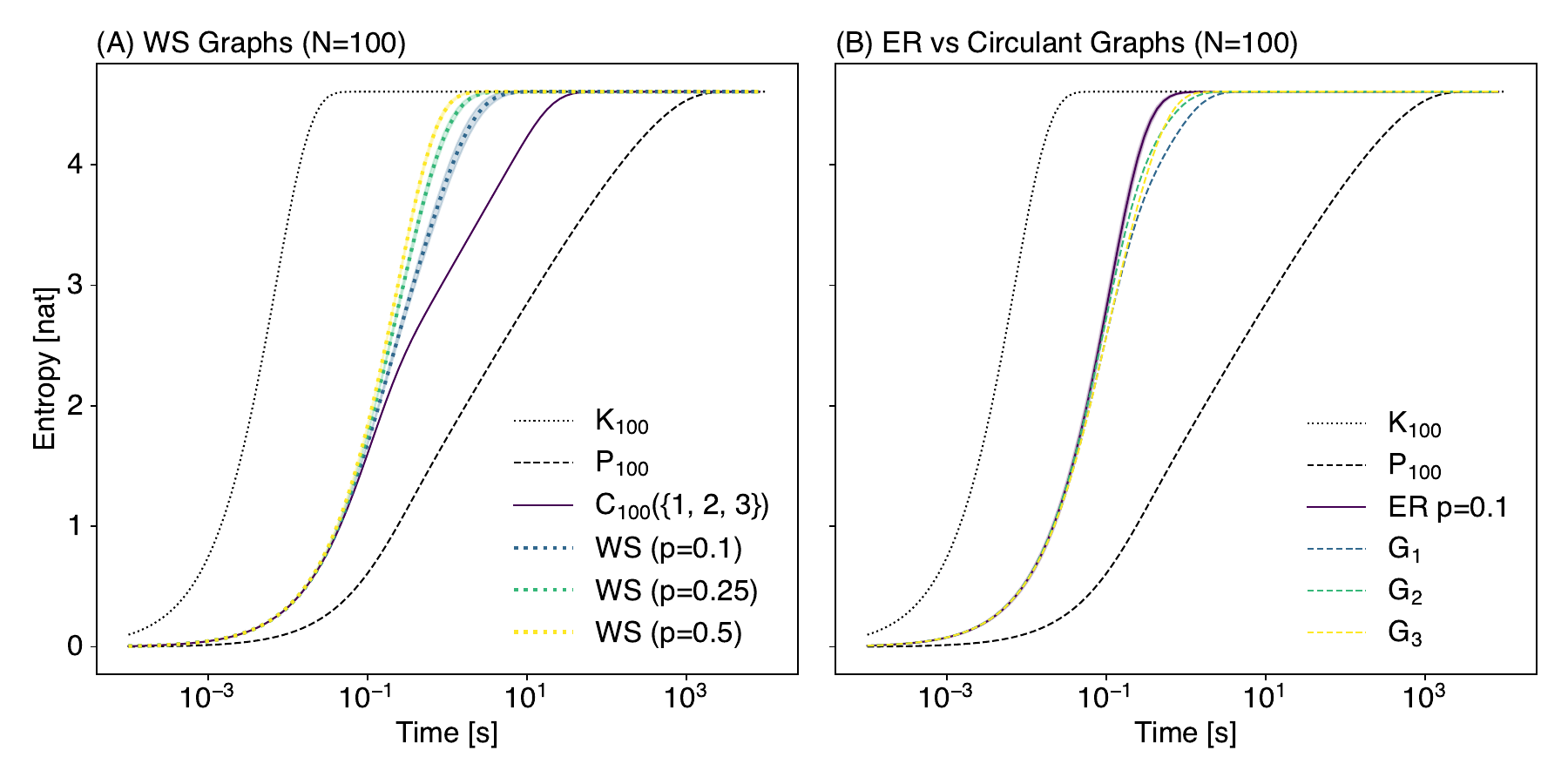}
  \caption{Comparison of Watts-Strogatz and ER graphs with circulant matrices sharing similar structural properties. For all stochastic models, we sample $10$ graphs of size $100$. (A) We use as reference $C_{100}(S)$, with $S = \{ 1,2,3 \}$. We use it as a skeleton for obtaining three Watts-Strogatz network models with increasing probability of edge rewiring: $p= 0.1, 0.25, 0.5$. The plot shows that by rewiring more edges, we raise the speed at which the conditional entropy increases. For high numbers of rewired edges, the variance of the entropy curve is low (shaded area indicates $\pm 1$ standard deviation). (B) We compare the conditional entropy averaged on a set of ER graphs to that of circulant graphs $G_1 = C_{100}(\{ 1,2,3,4, 13 \}), G_2 = C_{100}(\{ 1,3,5,11,13 \}), G_3 = C_{100}(\{ 1,10,11,12, 13 \})$ having a similar density to that of the ER graphs, but having other different structural properties. Indeed, the initial shape of the curve is similar because the number of neighbors is similar (by design). On the other hand, the conditional entropy on ER graphs then increases faster than on the circulant graphs because of the higher number of nodes at distance two compared to comparable circulant graphs. $G_1$ and $G_2$ have both diameters equal to $5$, but in the latter, nodes have fewer nodes at distances $4$ and $5$, whereas more at distances $2$ and $3$, leading to a faster growth of conditional entropy. In $G_3$, nodes have fewer nodes at distances $2$ compared to $G_2$, but more at distances $3$ and $4$, which is its diameter. Therefore, its entropy grows more slowly initially, but then catches up and overtakes that of $G_2$.}
\end{figure}

\subsection{Mean-field Approximation for Erdős-Rényi Graphs}
For ER graphs, we suggest a deterministic approximation. We treat the case where ER graphs are connected, which is a reasonable assumption in the supercritical regime where the average number of neighbors $c$ is bigger than 1. In this case, most of the nodes belong to the largest connected component of the graph. In particular, we know that the proportion of nodes that belong to this giant component in an ER graph is given by $S = 1 + \frac{W(-c e^{-c})}{c}$, where we take the principal branch of the Lambert $W$-function \cite[Chapter 11.5]{newman_networks_2018}.
This number grows fast as a function of $c$ to the asymptotic value 1, justifying our assumption.
\begin{figure}[htb]
  \centering
  \label{fig:mean_field}\includegraphics[width=0.95\linewidth]{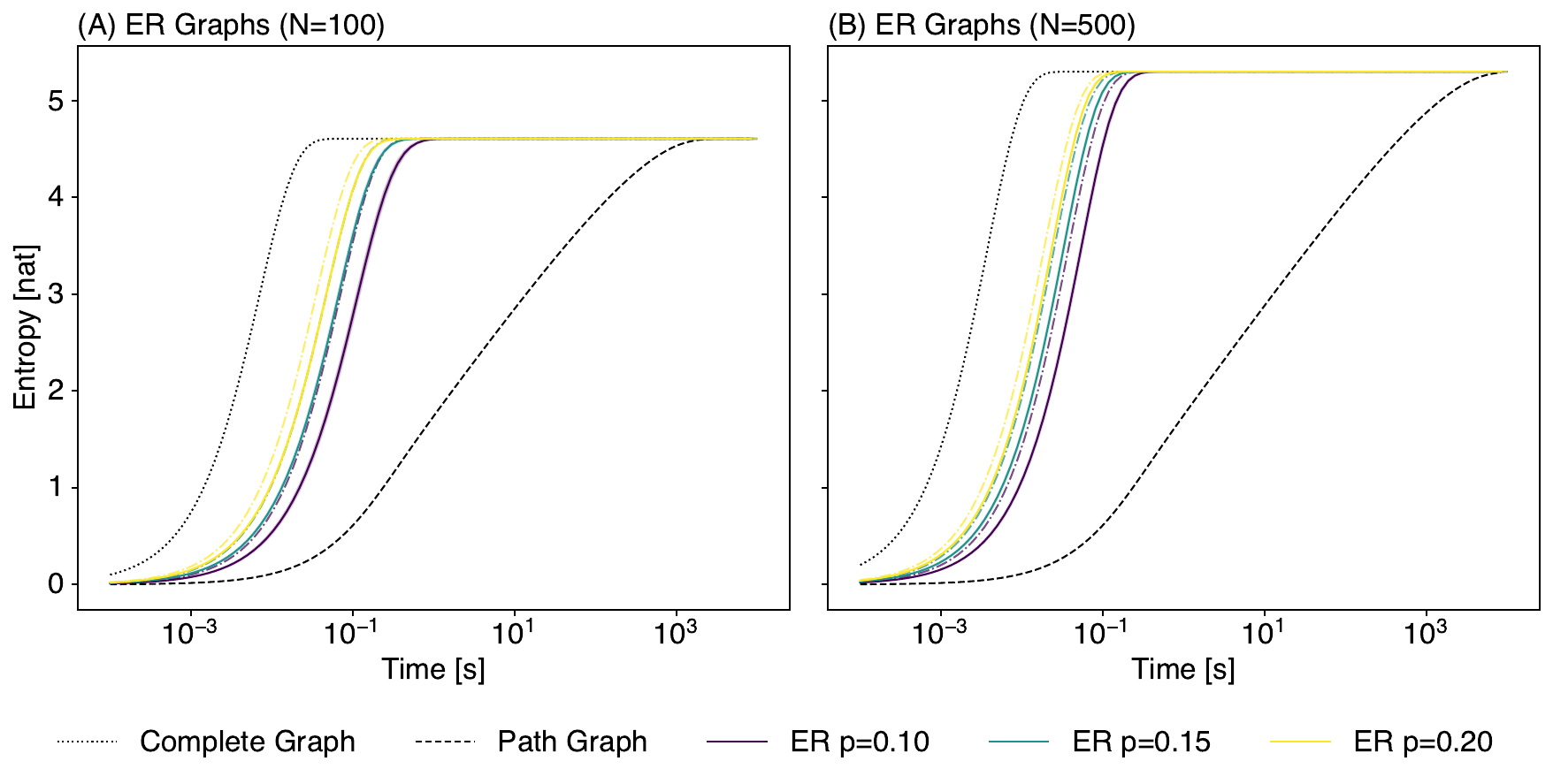}
  \caption{Evolution of the conditional entropy over samples of 5 ER graphs (dotted lines)  with three different density values plotted together with their mean-field approximation (dashed-dotted lines). (A) shows graphs of size 100, (B) graphs of size 500. The mean-field approximation overestimates entropy values; however, one can see that it becomes more accurate as the density of the graph and the size of the ER graph increase.}
\end{figure}
In the ensemble mean-field approximation, each node is statistically equivalent. The expected degree of each node is approximately $pN$.
The mean-field Laplacian is then $\langle \Lapl \rangle \coloneqq p(N\Id_N - \matr{J}_N)$ and its entries are
\begin{equation}
\langle L_{i,j} \rangle = \begin{cases}
pN & \text{ if } i = j, \\
-p & \text{ if } i \neq j.
\end{cases}
\end{equation}
We compute the average transition operator $e^{-\langle \Lapl \rangle t} = e^{-p(N \Id - \matr{J})t} = e^{-pNt} e^{p t \matr{J}}$.
We have $e^{p t \matr{J}} = \matr{I} + \frac{\matr{J}}{N} \left(e^{pNt} - 1 \right)$, therefore
\begin{equation}
e^{-\langle \Lapl \rangle t} = e^{-pNt} \Id + \frac{1 - e^{-pNt}}{N} \matr{J}.
\end{equation}
Which is similar to the result for the complete graph (\cref{eq:T_complete}), with an additional factor $p$ in the argument of the exponentials. In entry-wise form,
\begin{equation}
e^{-\langle \Lapl \rangle t}_{i,j} = \begin{cases}
a \coloneqq e^{-pNt} + \frac{1 - e^{-pNt}}{N} & \text{ if } i = j, \\
b \coloneqq \frac{1 - e^{-pNt}}{N} & \text{ if } i \neq j.
\end{cases}
\end{equation}
Hence, we obtain an explicit formula for the mean-field conditional entropy of heat diffusion on a connected ER graph, independent of the initial distribution:
\begin{equation}
\forall p(0): H(p (t) | p(0)) = -\left( a \ln a + (N-1) b \ln b \right).
\end{equation}
We show the quality of this mean-field approximation in \Cref{fig:mean_field}.
In both panels we sample 5 ER graphs with three different density values and compare them to their mean-field approximation. Panel (A) shows graphs of size 100, panel (B) shows graphs of size 500. In both plots, the mean-field approximation overestimates entropy values, however, it becomes more accurate as we increase the density of the graph. Moreover, the approximations shown on the right panel are more accurate than those on the left, suggesting that the mean-field approximation is more accurate for larger graphs.

\section{Conclusions}
We begin our conclusions by summarizing and contextualizing our findings. We conclude by offering some insights into the potential impact of this work and foreseeable research directions that stem from it.

We introduced a thermodynamic formalism for networks by combining diffusion dynamics and conditional entropy. The information-theoretical properties of Markov chains allowed us to investigate monotonicity of diffusion dynamics on graphs. Based on this discussion, we decided to focus on heat diffusion, for which we derived an asymptotic formula. We provided the formulas and proofs of the eigendecomposition of the heat transition matrices for complete, path, and circulant graphs, which we use for fast computation of their conditional entropy curves. Our analysis of mixing times partially justifies the use of the complete and path graphs as extremal cases to bound the entropy of other graphs of the same size. We used circulant graphs to gain a deeper understanding of conditional entropy in terms of edge density and the network's diameter. We also showed experimentally that the conditional entropy of heat increases more rapidly on random structures such as Watts-Strogatz and ER graphs compared to their circulant counterparts. Finally, we proposed a mean-field approximation for ER graphs and demonstrated its quality experimentally.

Despite the popularity of random walk and diffusion-based methods on graphs (e.g., \cite{newman_measure_2005,pons_computing_2006,rosvall_maps_2008,delvenne_stability_2010, lambiotte_random_2014,masuda_random_2017,carletti_random_2020,bovet_flow_2022}), there has been so far limited attention to studying the entropic properties of such processes on graphs. Some of the earlier works include Refs. \cite{latora_kolmogorov-sinai_1999, gomez-gardenes_entropy_2008}, while Refs. \cite{villegas_laplacian_2022, villegas_laplacian_2023, nartallo-kaluarachchi_broken_2024} are fruits of the recent revival in interest in the subject. This work is therefore part of the ongoing effort to establish mathematical frameworks for information theory, a unifying language to describe stochastic dynamics on networks. Different notions of entropy \cite{yang_unified_2020} possess distinct properties; our work highlights that, from the perspective of conditional entropy, heat diffusion on graphs behaves similarly to heat diffusion on a solid medium in the real world.
By analogy, diffusion on heterogeneous materials could be thought of as inspiration for studying the conditional entropy of heat diffusion on temporal and multilayer networks. Indeed, this work lays the groundwork for the development and understanding of physics-inspired methods in applications such as detecting change points and periodic patterns in the evolution of temporal networks.

The derivation of a thermodynamic formalism in a non-physical system, such as graphs, is also interesting from a philosophical perspective. Indeed, by expanding on a point raised by Cover in \cite{halliwell_physical_1996}, the fact that the information-theoretical properties of certain Markov chains are similar to those of physical heat offers an interesting perspective on the notoriously poorly understood phenomenology of the second law of thermodynamics. Indeed, one could argue that this fact points towards a Markovian description of thermodynamic systems (arguably the closest thing to determinism in a stochastic world), contextualizing the irreversible growth of entropy not as an a priori law of the universe, but as the product of the stochasticity that characterizes it. Future research in this direction could therefore explore other physics-inspired entropy quantities, such as the Wehrl Entropy \cite{wehrl_relation_1979} or, taken from information theory, the R\`enyi entropy \cite{renyi_measures_1961}. However, one should expect difficulties in the mathematical derivations. On this note, a significant step forward in the theory would be the derivation of formulas that describe the expected average behavior in random graphs, going beyond simulations such as the one we presented for Watts-Strogatz and ER graphs.

\appendix
\section{Data Processing Inequality} \label{app:DPI}

\begin{lemma}[Data-processing inequality for a Markov kernel]
\label{lem:DPI_markov_kernel}
Consider two Markov chains $\{ X(t) \}_{t \in \mathbb{R}^+}$ and $\{ Y(t) \}_{t \in \mathbb{R}^+}$ on the same state space and with the same transition matrix $\T(t_1,t_2)$ between times $t_1$ and $t_2$, with $t_1 \leq t_2$. Let $\p(t)$ and $\matr{q}(t)$ be the respective probability vectors at time $t$. Then:
\begin{equation}
\label{eq:DPI_markov_kernel}
    \dkl\!\left(\p(t_2)\,\|\,\matr{q}(t_2)\right)
    \leq
    \dkl\!\left(\p(t_1)\,\|\,\matr{q}(t_1)\right).
\end{equation}
\end{lemma}

\begin{proof}
By the chain rule for KL divergence (\cref{prop:chainKL}), we have
\begin{equation}
\label{eq:appendix_KL_chain_forward}
    \dkl\!\left(p(t_1,t_2)\,\|\,q(t_1,t_2)\right)
    =
    \dkl\!\left(\p(t_1)\,\|\,\matr{q}(t_1)\right)
    +
    \dkl\!\left(p(t_2\mid t_1)\,\|\,q(t_2\mid t_1)\right).
\end{equation}
We notice that, since both chains have the same transition probabilities,
\begin{equation*}
    p_j(t_2\mid X(t_1)=i)
    =
    q_j(t_2\mid Y(t_1)=i)
    =
    T_{ij}(t_1,t_2),
\end{equation*}
we get $\dkl\!\left(p(t_2\mid t_1)\,\|\,q(t_2\mid t_1)\right)=0$, hence
\begin{equation}
\label{eq:appendix_joint_KL_equals_initial_KL}
    \dkl\!\left(p(t_1,t_2)\,\|\,q(t_1,t_2)\right)
    =
    \dkl\!\left(\p(t_1)\,\|\,\matr{q}(t_1)\right).
\end{equation}
On the other hand, applying the chain rule in the opposite order gives
\begin{equation}
\label{eq:appendix_KL_chain_backward}
    \dkl\!\left(p(t_1,t_2)\,\|\,q(t_1,t_2)\right)
    =
    \dkl\!\left(\p(t_2)\,\|\,\matr{q}(t_2)\right)
    +
    \dkl\!\left(p(t_1\mid t_2)\,\|\,q(t_1\mid t_2)\right).
\end{equation}
By non-negativity of KL divergence, $\dkl\!\left(p(t_1\mid t_2)\,\|\,q(t_1\mid t_2)\right) \geq 0$; so, by combining \cref{eq:appendix_joint_KL_equals_initial_KL,eq:appendix_KL_chain_backward}, we obtain
$\dkl\!\left(\p(t_2)\,\|\,\matr{q}(t_2)\right) \leq\dkl\!\left(\p(t_1)\,\|\,\matr{q}(t_1)\right)$,
which proves the claim.
\end{proof}

\section{Computations for Complete graphs} \label{appendix:approx_complete}
We write the explicit formula of the conditional entropy of heat diffusion on complete graphs.
\begin{proof}
Let $Z(0)$ be distributed according to any initial distribution $\p(0)$, then
\begin{align*}
     H(Z(t) | Z(0)) &= -\sum_{i} p_i(0) \left[ \sum_{j \neq i} \frac{1}{N} \left(1 - e^{-N  t} \right) \log \left( \frac{1}{N} \left(1 - e^{-N  t} \right) \right) + \right. \\
    &+ \left. \frac{1}{N} \left(1 + (N-1)e^{-N  t} \right) \log \left( \frac{1}{N} \left( 1 + (N-1)e^{-N  t} \right) \right) \right] =\\
    &= -  \left( \frac{N-1}{N} - \frac{N-1}{N} e^{-N  t} \right) \left[ \log(\frac{1}{N}) + \log(1-e^{-N  t})\right] - \\
    &- (\frac{1}{N} + \frac{N-1}{N}) e^{-N  t} \left[ \log(\frac{1}{N}) + \log(1 + (N-1) e^{-N  t})\right] = \\
    &= - \left\{ \log(\frac{1}{N}) + \frac{N-1}{N} e^{- \frac{N  t}{N-1}} \log \left( \frac{1 + (N-1) e^{-\frac{N  t }{N-1}}}{1- e^{-N  t}} \right)+ \right. \\
    &+  \left. \frac{N-1}{N}\log(1-e^{-N  t}) + \frac{1}{N} \log(1+ (N-1) e^{-N  t}) \right\},
\end{align*}
    which leads to \cref{eq:exactHcomplete}.
\end{proof}

\section{Random Walk Diffusion} \label{appendix:RW}
Another diffusion dynamic that is prominent in the literature is random walk diffusion, whose intensity matrix is specified by the so-called random-walk Laplacian (denoted $\Lapl_{\mathrm{RW}}$) and is defined as
\begin{equation}
    (\Lapl_{\mathrm{RW}})_{ij}
    =
    \begin{cases} 
      1, & \text{if } i=j \text{ and } d_i>0, \\
      -\frac{1}{d_i}, & \text{if } \{v_i,v_j\}\in E,\ i\neq j, \\
      0, & \text{otherwise}.
   \end{cases}
\end{equation}
where $d_i$ denotes the degree of node $i$. \\
We can then write the corresponding Kolmogorov forward equation:
\begin{equation}
    \frac{d \p^{\intercal}(t)}{dt} = - \lambda \p^{\intercal}(t)\Lapl_{\mathrm{RW}} , \text{ with the solution } \p^{\intercal}(t) = \p^{\intercal}(0) e^{- \lambda \Lapl_{\mathrm{RW}}t},
\end{equation}
In connected graphs, for random walk diffusion, the process will converge to the stationary distribution $ \matr{\pi}^{\intercal}_{\mathrm{RW}} = [\frac{1}{d_1}, \frac{1}{d_2}, \dots, \frac{1}{d_N}] $.
\newcommand{\trwone}{\frac{1}{N} + \frac{N-1}{N} e^{- \frac{N}{N-1} t}}
\newcommand{\trwtwo}{\frac{1}{N} - \frac{1}{N}e^{-  \frac{N}{N-1} t}}
One can derive the formula for $\T_{\mathrm{RW}}$ on a complete graph immediately from \cref{eq:CTHeat_complete} by noticing that given the constant degree, random-walk diffusion is just a slowed-down version of heat diffusion where the rate of diffusion is equal to $\frac{1}{N-1}$ instead of $1$.
\begin{equation}\label{eq:CTRW_complete}
    T_{RW_{i,j}} (t)= \begin{cases}
   \trwone & \text{if } i = j, \\
    \trwtwo & \text{if } i \neq j.
\end{cases}
\end{equation}
From \cref{eq:CTRW_complete} we derive a formula for the entropy of random walk, analogously to \cref{eq:exactHcomplete}:
    \begin{equation} \label{eq:exactRWcomplete}
        \begin{aligned}
             H(Z_{\mathrm{RW}}(t) | Z_{\mathrm{RW}}(0)) &= \log(N) 
            - \frac{N-1}{N} e^{-\frac{N t}{N-1}} 
            \log\left(\frac{1 + (N-1) e^{-\frac{N t}{N-1}}}{1 - e^{-\frac{N t}{N-1}}}\right) \\
            &- \frac{N-1}{N} \log\left(1 - e^{-\frac{N t}{N-1}}\right) 
            - \frac{1}{N} \log\left(1 + (N-1) e^{-\frac{N t}{N-1}}\right).
            \end{aligned}
    \end{equation}
We provide a formula for the asymptotic value of random walk diffusion, which depends on the degree distribution of the nodes:
\begin{proposition} \label{prop_asymptot_RW}
    Let $\langle R \rangle$ indicate the expectation of a random variable $R$ with respect to the degree distribution $P_d$ of a graph. If that graph is connected, then the asymptotic value of the conditional entropy of random walk diffusion over it is
    \begin{equation} \label{eq_RWlimit}
         \lim_{t \to \infty} H(Z_{\mathrm{RW}}(t) | Z_{\mathrm{RW}}(0)) = \log(2M) - \frac{\langle d \log(d) \rangle}{\langle d \rangle},
    \end{equation}
    independently from the initial distribution $p(0)$.
\end{proposition}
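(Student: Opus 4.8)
The plan is to mirror the argument for heat diffusion in \cref{prop_asymptot}, replacing the uniform stationary distribution by the degree-weighted one. First I would identify the stationary distribution $\Tilde{\matr{\pi}}$ of the continuous-time random walk by solving $\Tilde{\matr{\pi}}^{\intercal}\Lapl_{RW}=\matr{0}$. Writing $\Lapl_{RW}=\Id_N-\matr{D}^{-1}\matr{A}$, with $\matr{D}$ the degree matrix and $\matr{A}$ the adjacency matrix, this condition becomes $\Tilde{\matr{\pi}}^{\intercal}=\Tilde{\matr{\pi}}^{\intercal}\matr{D}^{-1}\matr{A}$, and one checks directly that $\Tilde\pi_i = d_i/(2M)$ is a solution, since $\sum_i \frac{d_i}{2M}\frac{A_{ij}}{d_i} = \frac{1}{2M}\sum_i A_{ij} = \frac{d_j}{2M}$, where $2M=\sum_i d_i$. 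Connectedness, together with the stated non-bipartiteness, guarantees ergodicity and hence that this degree-weighted distribution is the unique stationary one.

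Next, the lemma establishing that the rows of $\lim_{t\to\infty}\T(0,t)$ all coincide with the stationary distribution gives $\lim_{t\to\infty}T_{i,j}(0,t)=\Tilde\pi_j = d_j/(2M)$, independently of the source node $i$. This is precisely the hypothesis of \cref{lemma_asymptot}, so the asymptotic conditional entropy is independent of the initial distribution $p(0)$ and equals the Shannon entropy of $\Tilde{\matr{\pi}}$. As in the proof of \cref{prop_asymptot}, the interchange of the limit with the entropy is justified by the continuity of $H$ on the finite-dimensional simplex together with the entrywise convergence of the transition matrix, giving
\begin{equation*}
\lim_{t\to\infty}H(Z_{RW}(t)\mid Z_{RW}(0)) = H(\Tilde{\matr{\pi}}) = -\sum_{j=1}^{N}\frac{d_j}{2M}\log\!\left(\frac{d_j}{2M}\right).
\end{equation*}

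The final step is the bookkeeping that rewrites this in terms of degree-distribution moments. Splitting the logarithm yields $\log(2M)-\frac{1}{2M}\sum_j d_j\log d_j$, and using $2M=\sum_j d_j = N\langle d\rangle$ together with $\sum_j d_j\log d_j = N\langle d\log d\rangle$ converts the second term into $\langle d\log d\rangle/\langle d\rangle$, which is exactly \cref{eq_RWlimit}. I do not expect a serious analytic obstacle; the only genuinely delicate point is pinning down the correct stationary distribution, which is degree-weighted ($\Tilde\pi_i\propto d_i$) rather than inversely proportional to the degrees, since every subsequent line relies on this identification and an error there would propagate directly into the moment bookkeeping.
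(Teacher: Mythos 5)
Your proposal is correct and follows essentially the same route as the paper's proof: identify the limiting rows of the transition matrix as $d_j/(2M)$, take the entropy of that distribution, and rewrite $-\sum_j \frac{d_j}{2M}\log\bigl(\frac{d_j}{2M}\bigr)$ in terms of degree-distribution moments. Your explicit verification that the stationary distribution is degree-weighted, $\Tilde{\pi}_i = d_i/(2M)$, is worth keeping: it is the form the paper's own computation relies on, whereas the displayed $\matr{\pi}_{RW}$ earlier in \cref{appendix:RW} is written with inverse degrees, so your check resolves that discrepancy in favor of the degree-proportional distribution.
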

\begin{proof}
    In the case of random walk diffusion, the asymptotic value of the transition matrix $\lim_{t \to \infty} T_{i,j}(0,t) $ is equal to $\frac{d_j}{2M}$.
    We therefore have that:
        \begin{align*}
            \lim_{t \to \infty} H(Z_{\mathrm{RW}}(t) | Z_{\mathrm{RW}}(0)) &=  H( \lim_{t \to \infty} Z_{\mathrm{RW}}(t) | Z_{\mathrm{RW}}(0)) = H(\matr{\pi}_{\mathrm{RW}}) \\
            &= - \sum_i \frac{d_i}{2M} \log(\frac{d_i}{2M}) \\
            &= -  \frac{1}{2M} \sum_i (d_i \log(d_i) - d_i \log(2M)) \\
            &= - \frac{1}{2M} [(\sum_i d_i \log(d_i)) - 2M \log(2M)] \\
            &= \log(2M) - \frac{1}{2M} \sum_j d_j \log(d_j)
        \end{align*}
    By switching to degree classes, we get:
    \begin{equation*}
        \lim_{t \to \infty} H(Z_{\mathrm{RW}}(t) | p) = \log(2M) - \frac{N}{2M}\sum_d P_d (d)  d \log(d) =  \log(2M) - \frac{ \langle d \log(d) \rangle}{\langle d \rangle}.
    \end{equation*}
\end{proof}
Given that the probability distribution over $N$ elements is maximal for the uniform distribution, we have that the limit value in  \cref{eq_RWlimit} is smaller or equal to the limit value for heat diffusion.

\section*{Acknowledgments}
We would like to acknowledge Thomas Lehéricy and Andrea Ulliana for the interesting discussions about the subject.

\bibliographystyle{siam}  
\bibliography{refs}  

\end{document}